\newcommand\hl[1]{#1}
\newcommand{\hlmath}[1]{#1}
\newtheorem{theorem}{Theorem}
\newtheorem{proposition}[theorem]{Proposition}
\newtheorem{lemma}[theorem]{Lemma}
\newtheorem{remark}[theorem]{Remark}
\newtheorem{corollary}[theorem]{Corollary}
\newtheorem{assumption}[theorem]{Assumption}
\newtheorem{example}[theorem]{Example}
\newcommand{\norm}[1]{\left\lVert#1\right\rVert}
\newcommand{\ip}[1]{\left\langle #1 \right\rangle}
\newcommand{\W}{\mathcal{W}}
\newcommand{\e}{\epsilon}
\newcommand{\Z}{\mathbb{Z}}
\newcommand{\E}{\mathbb{E}}
\newcommand{\Prob}{\mathbb{P}}
\renewcommand{\P}{\mathcal{P}}
\newcommand{\N}{\mathcal{N}}
\newcommand{\Q}{\mathcal{Q}}
\newcommand{\A}{\mathcal{A}}
\renewcommand{\H}{\mathcal{H}}
\newcommand{\X}{\mathcal{X}}
\newcommand{\Y}{\mathcal{Y}}
\newcommand{\B}{\mathcal{B}}
\newcommand{\C}{\mathcal{C}}
\newcommand{\R}{\mathbb{R}}
\newcommand{\tr}{\text{tr}}
\let\temp\phi
\let\phi\varphi
\let\varphi\temp
\begin{document}

\title{Weak convergence of adaptive Markov chain Monte Carlo}
\author{Austin Brown \orcidlink{0000-0003-1576-8381} and Jeffrey S. Rosenthal \orcidlink{0000-0002-5118-6808}}
\maketitle

\begin{abstract}
%In certain situations, the total variation distance is inadequate to develop a convergence theory for adaptive Markov chain Monte Carlo algorithms.
%In particular, the theory may be applied to adapt reducible Markov chains.
This article develops general conditions for weak convergence of adaptive Markov chain Monte Carlo processes and is shown to imply a weak law of large numbers for bounded Lipschitz continuous functions. This allows an estimation theory for adaptive Markov chain Monte Carlo where previously developed theory in total variation may fail or be difficult to establish. Extensions of weak convergence to general Wasserstein distances are established along with a weak law of large numbers for possibly unbounded Lipschitz functions. Applications are applied to auto-regressive processes in various settings, unadjusted Langevin processes, and adaptive Metropolis-Hastings.
\end{abstract}

\noindent\textit{Keywords:}
adaptive Markov chain Monte Carlo;
Birkhoff ergodic theorem;
weak convergence of adaptive MCMC;

\noindent\textit{MSC:} Primary: 60J05; Secondary: 60J22;

\section{Introduction}

Markov chain Monte Carlo (MCMC) provides a mean to estimate integrals with respect to a target probability measure from the empirical average of a Markov chain.
Many Markov chains require a delicate choice of tuning parameters to explore the state space properly such as Metropolis-Hastings \citep{roberts:rosenthal:2001} and discretized Langevin diffusions \citep{roberts:1996:langevin, robe:rose:1998:optimal}.
The optimal tuning parameter choice often depends on properties of the target probability measure which may be challenging to compute precisely.
At the same time, a poor tuning parameter choice may lead to unreliable estimation and diagnostics from the Markov chain. This motivates adaptive MCMC processes which automatically learn or adapt the tuning parameters of the Markov chain as the process progresses in time \citep{haario:2001, roberts:rosenthal:2007}.

The general theory for adaptive MCMC processes is accomplished through a convergence guarantee on the non-adapted Markov chain in the total variation distance combined with diminishing conditions on the adaptation of the tuning parameters \citep{roberts:rosenthal:2007}.
Numerous adaptation strategies are possible such as stochastic approximation \citep{robbins:monro:1951} or specifically designed strategies to rapidly decrease the adaptation as time progresses \citep{chimisov:2018}. 
The existing general theory results in the ability to approximate arbitrary bounded functions through a weak law of large numbers.
However, there has been increasing evidence that convergence in total variation is inadequate for many high dimensional target probability measures compared to convergence in Wasserstein distances from optimal transportation \citep{hairer:etal:2011, durmus:moulines:2015, qin:hobert:2021, qin:hobert:2022}.
The issues with analyzing convergence with total variation are not limited to high dimensions and may appear for certain diffusion processes in any dimension \citep{hairer:etal:2011} and even toy examples \citep{tweedie:1977, butkovsky:2014}. 

Since the introduction of adaptive MCMC \citep{haario:2001}, many advancements have been made based upon convergence in total variation \citep{yves:2005, haario:2005, roberts:rosenthal:2009} but weak convergence appears less explored.
For example, convergence theory for adaptive MCMC has been extended to handle augmented target distributions that may depend on the adaptation to target multi-modal distributions \citep{pompe:2020}. 
%Strong conditions on diminishing adaptation have been shown to yield a weak law of large numbers for unbounded functions \citep{yang:rosenthal:2009}.
Under specific adaptation strategies based on stochastic approximation, convergence theory under stronger assumptions can lead to a central limit theorem \citep{andrieu:2006}.
However, each of these theoretical results and guarantees are based on convergence of the non-adapted Markov chain in total variation.
%Current theoretical results for adaptive MCMC based on total variation can be difficult to establish or using the total variation distance or are not well-suited for sampling high dimensional probability measures where the total variation metric often degenerates. 

This article's main contribution is the weak convergence of adaptive MCMC processes under general conditions using Wasserstein distances that metrize the weak convergence of probability measures \citep{Villani2008, gibbs:2004}.
%This is extended to convergence in general Wasserstein distances.
%This is accomplished first through convergence in a   Wasserstein distance which metrizes weak convergence of probability measures.
Section~\ref{section:adaptive_mcmc} introduces the general adaptive MCMC regime, and Section~\ref{section:wasserstein_distances} reviews the existing theory and some motivating examples that emphasize the inadequacy of the existing convergence theory.
Section~\ref{section:convergence} extends the traditional convergence framework in total variation for adaptive MCMC \citep{roberts:rosenthal:2007} to a framework based on weak convergence.
While the convergence result is weaker than total variation, it provides theoretical guarantees for approximations of bounded Lipschitz functions and arbitrary closed sets via Strassen's theorem \citep{strassen:1965}.
Section~\ref{section:lln} develops general conditions for a weak law of large numbers applied to bounded Lipschitz functions based on weak convergence.

\hl{Some examples and applications are explored in Section~\ref{section:convergence} with adapted auto-regressive processes, adaptive unadjusted Langevin processes, adaptive Langevin diffusions, and adaptive Metropolis-Hastings.
Beyond the examples studied here, the weak convergence theory for adaptive MCMC can be used to develop new adaptive algorithms for Bayesian inverse problems popular in physics that involve sampling posterior distributions on infinite-dimensional spaces where total variation can be problematic \citep{cotter:etal:2013}.
Another potentially useful application of the theory developed here is demonstrated in the adaptive Langevin diffusion example where using Wasserstein distances to show weak convergence can yield simpler proofs of the required conditions in comparison to proofs needed to show convergence in total variation.}

Weak convergence and the law of large numbers are further extended to general Wasserstein distances under stronger conditions.
The main application of this extension is a law of large numbers for unbounded Lipschitz functions in Section~\ref{section:lln} and is of practical relevance in statistics.
In particular, this extends the weak law of large numbers for Lipschitz functions for adaptive MCMC processes \citep{roberts:rosenthal:2007} and for Markov chains \citep{sandric:2017}.
Recently, a law of large numbers for bounded Lipschitz functions has been developed under strong contraction conditions in Wasserstein distances combined with strong limitations on the adaptation \citep{hofstadler:2024}.
The law of large numbers developed here holds under more general conditions and can apply to unbounded Lipschitz functions under suitable conditions.
Section~\ref{section:conclusion} discusses our theoretical results along with limitations and potential extensions of the newly developed theory.

\section{Background: adaptive Markov chain Monte Carlo processes}
\label{section:adaptive_mcmc}

Let $\Z_+$ denote the positive integers and denote the minimum and maximum of $a, b \in \R$ by $a \wedge b$ and $a \vee b$ respectively.
For a Borel measurable space $S$, let $\B(S)$ denote its Borel sigma field.
The Euclidean norm is denoted by $\norm{\cdot}$ and for a measure $\mu$, Lebesgue spaces are denoted by $L^p(\mu)$.
For a real-valued function $f$, denote the optimal Lipschitz constant with respect to a metric $d$ by $\norm{f}_{\text{Lip}(d)} = \sup_{ x \not= y} |f(y) - f(x)| / d(x, y)$.

We follow closely to the adaptive MCMC process framework of \citep{roberts:rosenthal:2007}.
Let $( \Omega, \B(\Omega) )$ be a Borel measurable space and let $\X$ and $\Y$ be complete separable metric spaces with respect to some metrics and $\B(\X)$ and $\B(\Y)$ be their respective Borel sigma fields. 
Let $\pi$ be a target Borel probability measure on $\X$.
For a discrete time index $t \in \Z_+$, the adaptive process updates a random tuning parameter $\Gamma_t : \Omega \mapsto \Y$ as the process progresses using the entire history to improve the distribution of $X_t : \Omega \mapsto \X$. 
The result is for the marginal distribution of $X_t$ to approximate the target distribution $\pi$.

We define generalized Borel measurable probability transition kernels $(\Q_{t})_{t \ge 0}$ with $\Q_t : (\Y \times \X)^t \times \B(\Y) \mapsto [0, 1]$ and a family of Borel measurable Markov transition kernels $(\P_{\gamma})_{\gamma \in \Y}$ with $\P_\gamma : \X \times \B(\X) \mapsto [0, 1]$ to prescribe the adaptive process by the relations
\begin{align*}
&\Prob(\Gamma_t \in d\gamma | H_{t-1} )
= \Q_{t}(H_{t-1}, d\gamma)
&\Prob(X_t \in dx | \Gamma_t, X_s, H_{t-1} )
= \P_{\Gamma_t}(X_s, dx)
\end{align*}
where $H_{t} = (\Gamma_0, X_0, \ldots, \Gamma_{t}, X_{t})$ denotes the history at time $t$.
This prescribes the finite-dimensional distributions so that \hl{$(\Gamma_0, X_0, \ldots, \Gamma_t, X_t)$ for fixed $\Gamma_0, X_0$ has joint distribution}
\begin{align*}
\hlmath{ \A^{(0, \ldots, t)}((\gamma_0, x_0), d\gamma_1, dx_1, \ldots, d\gamma_t, dx_t)
= \prod_{k = 1}^{t} \Q_{k}(h_{k - 1}, d\gamma_{k}) \P_{\gamma_{k}}(x_{k - 1}, dx_{k}) }
\end{align*}
with history $\hlmath{h_{k} = ( \gamma_{0}, x_{0}, \ldots, \gamma_{k}, x_{k} )}$.
This defines an \textit{adaptive process} $(\Gamma_t, X_t)_{t \ge 0}$ adapted to the filtration $\H_t = \B(\Gamma_s, X_s : 0 \le s \le t)$ and initialized at any probability measure $\mu$ on $( \Y \times \X, \B(\Y \times \X) )$ by the Ionescu-Tulcea extension theorem \citep{tulcea:1949}.

We will mostly be concerned with the marginal distribution $X_t$ from fixed initialization points $\gamma, x \in \Y \times \X$ and general initializations $\mu$ on $( \Y \times \X, \B(\Y \times \X) )$, defined by
\begin{align}
&X_t | \Gamma_0, X_0 = \gamma, x \sim \A^{(t)}((\gamma, x), \cdot),
&X_t \sim \mu\A^{(t)}(\cdot)
= \int_{\Y \times \X} \A^{(t)}((\gamma, x), \cdot) \mu(d\gamma, dx)
\label{eq:adaptive_process}
\end{align}

\section{Background: Wasserstein distances}
\label{section:wasserstein_distances}

Let $d : \X \times \X \to [0, \infty)$ be a lower-semi-continuous metric. Define the Wasserstein distance or transportation distance of order $p \in \Z_+$ between two arbitrary Borel probability measures $\mu$ and $\nu$ on $(\X, \B(\X))$ by
\[
\W_{d, p}(\mu, \nu)
= \left( \inf_{\xi \in \C(\mu, \nu)} \int_{\X \times \X} d(x, y)^p \xi(dx, dy) \right)^{1/p}
\]
where $\C(\mu, \nu)$ is the set of all joint probability measures $\xi$ such that $\xi(\cdot \times \X) = \mu(\cdot)$ and $\xi(\X \times \cdot) = \nu(\cdot)$.
%Wasserstein distances $\W_{c, p}(\cdot, \cdot)$ are distances when $c(\cdot, \cdot)$ is a metric on $\X$ and often called Wasserstein distances \citep[Chapter 6]{Villani2008}.
We generally suppress the $1$ in the $L^1$ metric and denote $\W_{d}(\mu, \nu) := \W_{d, 1}(\mu, \nu)$.

The total variation distance denoted by $\W_{\text{TV}}(\cdot, \cdot)$ between probability measures can be seen as a special case of a Wasserstein distance when the metric is defined by $I_{D^c}(\cdot, \cdot)$ with the off-diagonal set $D^c = \{ x, y \in \X \times \X : x \not= y \}$.
If $(\X, d)$ is a complete separable metric space, then the standard bounded metric $d(\cdot, \cdot) \wedge 1$ defines a Wasserstein distance $\W_{d \wedge 1}(\cdot, \cdot)$. This Wasserstein distance metrizes the weak convergence of probability measures through the bounded Lipschitz metric \citep[Theorem 11.3.3]{dudley:2018} and is equivalent up to a constant to the bounded Lipschitz metric by Kantovorich-Rubinstein duality \citep[Theorem 1.4]{Villani2003}.

Traditional theory of adaptive MCMC considers an adaptive process $(\Gamma_t, X_t)_{t \ge 0}$ initialized at $\Gamma_0, X_0 = \gamma, x \in \Y \times \X$ satisfying a \textit{(strong) diminishing adaptation} condition
\begin{align}
&\lim_{t \to \infty} \sup_{x \in \X} \W_{\text{TV}}(\P_{\Gamma_{t + 1}}(x, \cdot), \P_{\Gamma_{t}}(x, \cdot))
= 0
\label{assumption:strong_da}
\end{align}
\hl{in probability} with the supremum assumed Borel measurable and a \textit{(strong) containment} condition which is to show for any $\e \in (0, 1)$, the sequence
\begin{align}
M_{\e}(\Gamma_t, X_t)
= \inf\left\{ N \in \Z_+ : 
\W_{\text{TV}}\left( \P_{\Gamma_t}^n(X_t, \cdot), \pi \right) \le \e
\text{ for all } n \ge N \right\}
\label{assumption:strong_containment}
\end{align}
is bounded in probability, that is, 
$
\lim_{N \to \infty} \sup_{t \ge 0} \Prob\left( M_{\e}(\Gamma_t, X_t) > N \right) = 0.
$
The (strong) diminishing adaptation restricts the adaptation plan for $(\Gamma_t)_t$ and (strong) containment is a uniform convergence requirement on the non-adapted Markov chain.

Under these two conditions, one has the guarantee \citep{roberts:rosenthal:2007} that for every fixed initialization $\gamma, x \in \Y \times \X$ and for every bounded Borel measurable function $\phi : \X \to \R$,
\begin{align*}
&\lim_{t \to \infty} \W_{\text{TV}}\left( \A^{(t)}( (\gamma, x), \cdot), \pi \right)
= 0
&\text{ and }
&&\frac{1}{t} \sum_{s = 1}^t \phi(X_s)
\to \int_\X \phi d\pi
\end{align*}
in probability as $t \to \infty$.
Both of these guarantees have many practical applications in the reliability of Monte Carlo simulations in Bayesian statistics.
General conditions for (strong) containment \eqref{assumption:strong_containment} to hold have also been developed \citep{bai:2011, krys:rosenthal:2014}.

The (strong) containment condition \eqref{assumption:strong_containment} is often established via simultaneous drift and minorization conditions \citep{roberts:rosenthal:2007, bai:2011}.
This requires a drift function $V : \X \to [0, \infty)$ and identification of a small set $S = \{ x \in \X : V(x) \le R \}$ so that there is are constants $\lambda, \alpha \in (0, 1)$ and $L \in (0, \infty)$ where is it required $R > 2 L / (1 - \lambda)$ and for every $\gamma, x \in \Y \times \X$, there is a Borel probability measure $\nu_\gamma$ on $\X$ such that
\begin{align}
&\inf_{y \in S} \P_{\gamma}(y, \cdot) 
\ge \alpha \nu_\gamma(\cdot)
&\text{ and }
&&(\P_\gamma V) (x) \le \lambda V(x) + L.
\label{eq:dm}
\end{align}
These techniques yield (strong) containment \eqref{assumption:strong_containment} through a geometric rate $r \in (0, 1)$ \hl{and constant $M_0 > 0$} so that for $t, n \in \Z_+$
\begin{align*}
\W_{\text{TV}}\left( \P_{\Gamma_t}^n(X_t, \cdot), \pi \right)
\le \hlmath{M_0} r^n V(X_t)
\end{align*}
and $( V(X_t) )_{t \ge 0}$ is bounded in probability \citep[Theorem 18]{roberts:rosenthal:2007}.

The identification of such a small set $S$ and drift function $V$ as in \eqref{eq:dm} often becomes problematic in large dimensions as probability measures often tend towards mutual singularity. Even in low dimensions, a small set may not exist as a non-adapted Markov kernel may fail to be irreducible meaning for each $\gamma \in \Y$, there is no Borel probability measure $\varphi_\gamma$ on $\X$ such that $\varphi_\gamma(\cdot) > 0$ implies $\P_\gamma(x, \cdot) > 0$ for all $x \in \X$.
In this case, it is not possible to find such a small set regardless of the dimension \citep[Theorem 5.2.2]{meyn:tweedie:2012}.

\section{Motivating examples}

The following running examples illustrate problematic points with analysis in total variation for adapting the tuning parameters of Markov chains compared to their alternative weak convergence properties.
In particular, (strong) containment \eqref{assumption:strong_containment} may fail.
Stark differences in the convergence characteristics may even appear when adapting a discrete Markov chain as the following example illustrates.

\begin{example}
\label{example:discrete_ar_process}
(Discrete auto-regressive process)
Let $\gamma \in \Z_+$ with $\gamma \ge 2$ and $(\xi^{\gamma}_t)_{t \ge 0}$ be independent uniformly distributed discrete random variables on $\{ 0, 1/\gamma, 2/\gamma, \ldots, (\gamma - 1)/\gamma \}$.
With $X_0 = x \in [0, 1)$, define the auto-regressive process for $t \in \Z_+$ by
\[
X^{\gamma}_{t} = \frac{1}{\gamma} X^{\gamma}_{t-1} + \xi^{\gamma}_{t}.
\]
For each fixed $\gamma \ge 2$, this defines a Markov chain with Markov transition kernel denoted by $\P_\gamma$. 
It can be shown the invariant probability measure $\pi \equiv \text{Unif}(0, 1)$ is Lebesgue measure on $[0, 1)$.

For any adaptive process $(\Gamma_t, X_t)_{t \ge 0}$ using these Markov kernels $(\P_\gamma)_{\gamma}$, traditional convergence theory in total variation \citep[Theorem 13]{roberts:rosenthal:2007} is inadequate.
Indeed, it can be shown that
\[
\W_{\text{TV}}\left( \P^t_\gamma(x, \cdot), \text{Unif}(0, 1) \right) = 1
\]
and (strong) containment \eqref{assumption:strong_containment} fails under any adaptive strategy.
On the other hand, weak convergence is exponentially fast.
For $t \in \Z_+$ and any fixed $\gamma$, starting from $X_0 = x \in [0, 1)$ and $Y_0 = y \in [0, 1)$, define $X^{\gamma}_{t} = \frac{1}{\gamma} X^{\gamma}_{t-1} + \xi^{\gamma}_{t}$ and $Y^{\gamma}_{t} = \frac{1}{\gamma} Y^{\gamma}_{t-1} + \xi^{\gamma}_{t}$ with shared discrete uniformly distributed random variable $\xi^{\gamma}_{t}$. 
These random variables $(X^{\gamma}_{t}, Y^{\gamma}_{t})$ define a coupling so that for any $x \in [0, 1)$ and $\gamma \ge 2$
\begin{align*}
\W_{ |\cdot|}\left( \P^t_\gamma(x, \cdot), \text{Unif}(0, 1) \right) 
%&\le \int_{[0, 1)} \W_{ |\cdot|}\left( \P^t_\gamma(x, \cdot), \P^t_\gamma(y, \cdot) \right) dy
%\\
&\le \int_{[0, 1)} \E\left[ \left| X^{\gamma}_{t} - Y^{\gamma}_{t}\right| X_0 = x, Y_0 = y \right] dy
\\
&\le 2^{-t}.
\end{align*}
In particular, we will show later that under a suitable adaptation strategy, the adaptive process converges weakly using this Wasserstein distance.
\end{example}

The next example shows how problems appear in infinite-dimensions. Although the example is somewhat abstract, poor scaling properties in infinite-dimensions can also appear in practical high-dimensional scenarios in statistics.

\begin{example}
\label{example:infinite_ar_process}
(Infinite-dimensional auto-regressive process)
Consider a separable Hilbert space $H$ with infinite dimension and inner product $\ip{\cdot, \cdot}$.
Let $\N(0, C)$ be a Gaussian Borel probability measure on $H$ with mean $0 \in H$ and symmetric positive covariance operator $C : H \to H$ such that
$
\tr(C)  = \sum_{k = 1}^{\infty} \langle C u_k, u_k \rangle < \infty
$
where $(u_k)_k$ is any orthonormal basis of $H$.
We will further assume $C$ is non-degenerate so that for every $x, y \in H$, $C x \equiv 0 \in H$ implies $x \equiv 0 \in H$.
For some $\gamma^* < 1$, consider the family of Markov transition kernels $(\P_{\gamma})_{\gamma\in (0, \gamma^*)}$ for the auto-regressive process $(X^{\gamma}_t)_{t \ge 0}$ where $(\xi_{t})_t$ are independent with $\xi_{t} \sim \N(0, C)$ and
\begin{align}
&X^{\gamma}_t = \gamma X^{\gamma}_{t-1} + \sqrt{1 - \gamma^2} \xi_{t},
&t \in \Z_+.
\label{eq:ar_infinite}
\end{align}
For any fixed $\gamma \in (0, \gamma^*)$, if $X^{\gamma}_{t-1} \sim \N(0, C)$, then $X^{\gamma}_{t} \sim \N(0, C)$ and the invariant probability measure is $\N(0, C)$.

For an adaptive auto-regressive process $(\Gamma_t, X_t)_{t \ge 0}$ defined by \eqref{eq:ar_infinite}, convergence theory in total variation \citep[Theorem 13]{roberts:rosenthal:2007} fails to provide a convergence guarantee.
For each $x \in H$ and $\gamma \in (0, 1)$,
\[
\W_{\text{TV}}( \P_{\gamma}(x, \cdot), \N(0, C) ) = 1
\]
due to the covariances differing and the Feldman-Hajeck Theorem (see \citep[Theorem 2.25]{prato:etal:2014}).
It follows that (strong) containment \eqref{assumption:strong_containment} cannot hold under any adaptation strategy \eqref{assumption:strong_da}.
However, convergence in $L^2$-Wasserstein distances is exponentially fast.
Initialized with $X_0 = x \in H$ and $Y_0 = y \in H$, define 
\begin{align*}
&X^{\gamma}_t = \gamma X^{\gamma}_{t-1} + \sqrt{1 - \gamma^2} \xi_{t}
&\text{ and }
&&Y^{\gamma}_t = \gamma Y^{\gamma}_{t-1} + \sqrt{1 - \gamma^2} \xi_{t}
\end{align*}
using common random variable $\xi_t \sim N(0, C)$.
This defines a coupling so that the $L^2$-Wasserstein distance is upper bounded with
\begin{align*}
\W_{\norm{\cdot}, 2}( \P_{\gamma}^t(x, \cdot),  \N(0, C) ) 
&\le \left[  \int_H \W_{\norm{\cdot}, 2}( \P_{\gamma}^t(y, \cdot),  \P_{\gamma}^t(x, \cdot) )^2 \pi(dy) \right]^{1/2}
%\\
%&\le \left[ \int_H \E\left[ \norm{X^\gamma_t - Y^\gamma_t}^2 \bigm| X_0 = x, Y_0 = y \right] \pi(dy) \right]^{1/2}
\\
&\le {\gamma^*}^t \left[ \norm{x} + \sqrt{\tr(C)} \right].
\end{align*} 
\end{example}

\section{Main results}
\label{section:convergence}

This section extends previous results on convergence in total variation of adaptive MCMC processes to weak convergence and general Wasserstein distances \citep[Theorem 5]{roberts:rosenthal:2007} and \citep[Theorem 13]{roberts:rosenthal:2007}.
Let $\rho(\cdot, \cdot)$ be a lower semicontinuous metric on $\X$ so $\W_{\rho \wedge 1}(\cdot, \cdot)$ defines a Wasserstein distance.
If $(X, \rho)$, is a complete separable metric space, then $\W_{\rho \wedge 1}(\cdot, \cdot)$ metrizes weak convergence \citep[Theorem 11.3.3]{dudley:2018}.
A motivation for this convergence is Strassen's theorem which gives approximations to arbitrary closed sets \citep[Corollary 1.28]{Villani2003}.
However, $\rho(\cdot, \cdot)$ need only satisfy the axioms of a metric and $\W_{\rho \wedge 1}(\cdot, \cdot)$ is defined more generally.

The first simple situation is to introduce a stopping time $T$ such that the adaptation terminates and $\Gamma_T = \Gamma_t$ for all $t \ge T$.
For any $T \ge 1$ determining a stopping point of adaptation, we can construct a finite adaptation process $(Y_t, \Gamma_t)_{t = 0}^{\infty}$ adapted to the filtration $\H'_t = \B(Y_s, \Gamma_s : 0 \le s \le t)$ initialized at $\Gamma_0, Y_0, = \gamma, x$ such that $Y_t = X_t$ for $0 \le t \le T$ and is a Markov chain further out, that is, $Y_{t + 1} | \H'_{t}, Y_{t} = y \sim \P_{\Gamma_{T}}(y, \cdot)$ for $t \ge T$.
Denote the marginal distribution $B^{(T, t - T)}((\gamma, x), \cdot) = \Prob(Y_t \in \cdot | \Gamma_0, Y_0 = \gamma, x)$.

\begin{proposition}
Let $\rho(\cdot, \cdot)$ be a lower semicontinuous metric on $\X$ and let $(\Gamma_t, X_t)_{t \ge 0}$ a finite adaptive process with initialization probability measure $\mu$ as in \eqref{eq:adaptive_process}.
If for every initialization $x \in \X$ and every $\gamma \in \Y$,
$
\lim_{t \to \infty} \W_{\rho \wedge 1}(\P_{\gamma}^t(x, \cdot), \pi) = 0,
$
then
\begin{align*}
\lim_{t \to \infty} \W_{\rho \wedge 1}(\mu B^{(T, t - T)}, \pi) = 0.
\end{align*}
\end{proposition}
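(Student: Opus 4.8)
The plan is to condition on the history $\H'_T$ at the adaptation-termination time $T$ and exploit that, beyond time $T$, the process $(Y_t)_{t \ge T}$ is an ordinary Markov chain driven by the frozen kernel $\P_{\Gamma_T}$. Writing $\nu_T$ for the joint law of $(\Gamma_T, Y_T)$ under the initialization $\mu$ (which coincides with that of $(\Gamma_T, X_T)$, since $Y_t = X_t$ for $t \le T$), the Markov property after time $T$ yields the mixture representation
\begin{align*}
\mu B^{(T, t - T)}(\cdot)
= \int_{\Y \times \X} \P_{\gamma}^{\, t - T}(y, \cdot)\, \nu_T(d\gamma, dy),
\qquad t \ge T.
\end{align*}

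Next I would use the joint convexity of $(\mu', \nu') \mapsto \W_{\rho \wedge 1}(\mu', \nu')$. Concretely, for each $(\gamma, y)$ pick a coupling $\xi_{\gamma, y} \in \C(\P_\gamma^{\, t - T}(y, \cdot), \pi)$ that is optimal (or, to sidestep measurable-selection subtleties, optimal up to an additive $\e$), chosen so that $(\gamma, y) \mapsto \xi_{\gamma, y}$ is Borel measurable; then $\int \xi_{\gamma, y}\, \nu_T(d\gamma, dy)$ is a coupling of $\mu B^{(T, t - T)}$ and $\pi$ (its first marginal is the mixture above, its second is $\int \pi\, \nu_T = \pi$). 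Integrating $\rho \wedge 1$ against it gives
\begin{align*}
\W_{\rho \wedge 1}\!\left( \mu B^{(T, t - T)}, \pi \right)
\le \int_{\Y \times \X} \W_{\rho \wedge 1}\!\left( \P_{\gamma}^{\, t - T}(y, \cdot), \pi \right) \nu_T(d\gamma, dy),
\end{align*}
the integrand being Borel measurable in $(\gamma, y)$ by lower semicontinuity of $\rho \wedge 1$ and standard properties of Wasserstein distances.

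To conclude, apply the hypothesis pointwise: for every fixed $\gamma \in \Y$ and $y \in \X$ we have $\W_{\rho \wedge 1}(\P_\gamma^{\, t - T}(y, \cdot), \pi) \to 0$ as $t \to \infty$, since $t - T \to \infty$. Because $\rho \wedge 1 \le 1$ forces $\W_{\rho \wedge 1} \le 1$, the bounded convergence theorem applies to the integral over the probability measure $\nu_T$, so the right-hand side above tends to $0$, and hence $\W_{\rho \wedge 1}(\mu B^{(T, t - T)}, \pi) \to 0$.

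The routine parts are the mixture representation (immediate from the construction of the finite adaptation process) and the final bounded-convergence step. The one point needing care is the convexity inequality together with the measurable choice of (near-)optimal couplings $\xi_{\gamma, y}$; this is standard, but it is where a fully rigorous argument spends its effort, and it is also where the uniform bound $\W_{\rho \wedge 1} \le 1$ is convenient, since it permits using $\e$-optimal couplings without any integrability concern.
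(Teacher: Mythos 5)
Your proof is correct and takes essentially the same route as the paper: both decompose $\mu B^{(T,t-T)}$ as a mixture of the frozen-kernel laws $\P_{\Gamma_T}^{\,t-T}(X_T,\cdot)$ over the time-$T$ history, use the existence and Borel measurability of optimal couplings to pass $\W_{\rho\wedge 1}$ inside the integral, and conclude by dominated (here, bounded) convergence. Your added remarks on $\epsilon$-optimal couplings and the uniform bound $\W_{\rho\wedge 1}\le 1$ are helpful elaborations of the measurable-selection step but do not change the underlying argument.
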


\begin{proof}
Since the optimal coupling exists \citep[Theorem 4.1]{Villani2008} and is Borel measurable \citep[Corollary 5.22]{Villani2008},
\[
\W_{\rho \wedge 1}(\mu B^{(T, t - T)}, \pi)
\le \E\left[ \W_{\rho \wedge 1}(\P_{\Gamma_{T}}^{t - T}(X_T, \cdot), \pi) \right].
\] 
The conclusion follows by dominated convergence.
\end{proof}

While finite adaptation may be a safe strategy, infinite adaptation where the process continually learns tuning parameters is often of greater interest in applications \citep{roberts:rosenthal:2007}.
Consider now the following two weakened assumptions both generalized from \citep{roberts:rosenthal:2007}.
The first assumption is a weak restriction on the adaptation and the second is a weak containment condition on convergence of the non-adapted Markov chain.

\begin{assumption}
Let $(\Gamma_t, X_t)_{t \ge 0}$ an adaptive process with initialization probability measure $\mu$ \hl{as in} \eqref{eq:adaptive_process}.
Let $\rho(\cdot, \cdot)$ and  $\tilde{\rho}(\cdot, \cdot)$ be a lower semicontinuous metrics on $\X$ such that $\rho(\cdot, \cdot) \wedge 1 \le \tilde{\rho}(\cdot, \cdot) \wedge 1$.
Suppose the following conditions hold:

\begin{enumerate}
\item (Weak containment)
Suppose for any $\e \in (0, 1)$, the sequence 
\[
\hlmath{M_{\e, \rho}(\Gamma_t, X_t)}
:= \inf\left\{ N \ge 0 : \W_{\rho \wedge 1}\left[ \P_{\Gamma_t}^n(X_t, \cdot), \hlmath{\pi} \right] \le \e \text{ for all $n \ge N$} \right\}
\]
is bounded in probability, that is,
\begin{align}
\lim_{T \to \infty} \sup_{t \ge 0} \Prob\left( \hlmath{M_{\e, \rho}(\Gamma_t, X_t)} \ge T \right) = 0.
\label{assumption:weak_containment}
\end{align}

\item (Weak diminishing adaptation)
Suppose there is a conditional coupling $x, y, \gamma, \gamma' \mapsto \xi_{x, y, \gamma', \gamma} \in \C[\P_{\gamma'}(x, \cdot), \P_{\gamma}(y, \cdot)]$ and a real-valued sequence $(\delta_k)_{k \ge 0}$ with $\lim_{k \to \infty} \delta_k = 0$ such that
\begin{align}
D_{\tilde{\rho}}(\Gamma_{t + 1}, \Gamma_t)
:=
\lim_{k \to \infty}  \sup_{ \{ x, y \in \X \times \X : \tilde\rho(x, y) \le \delta_k \} } 
\int_{\X \times \X} \tilde\rho(x', y') \wedge 1 \xi_{x, y, \Gamma_{t + 1}, \Gamma_{t}}(dx', dy')
%\W_{\tilde\rho \wedge 1}\left( \P_{\Gamma_{t + 1}}(x, \cdot), \P_{\Gamma_{t}}(y, \cdot) \right) 
\label{assumption:weak_da}
\end{align}
is $\H_{t + 1}$ measurable and $D_{\tilde{\rho}}(\Gamma_{t + 1}, \Gamma_t) \to 0$ in probability as $t \to \infty$.

\end{enumerate}

\end{assumption}

There are existing results to bound the convergence rate of non-adapted Markov chains which can be modified to satisfy the weak containment condition using drift and coupling techniques \citep{hairer:etal:2011,durmus:moulines:2015}.
Note that $\rho(\cdot, \cdot) \wedge 1 \le I_{ \{ x \not= y \} }(\cdot, \cdot)$ and (strong) diminishing adaptation \citep{roberts:rosenthal:2007} implies weak diminishing adaptation \eqref{assumption:weak_da}. 
We then immediately have Proposition~\ref{proposition:implications} below.
In certain cases it may be simpler to show (strong) diminishing adaptation where only weak containment \eqref{assumption:weak_containment} holds and the implications of Proposition~\ref{proposition:implications} below are visualized in Figure~\ref{figure:conditions}.

\begin{proposition}
\label{proposition:implications}
Let $(\Gamma_t, X_t)_{t \ge 0}$ be an adaptive process with initialization probability measure $\mu$ \hl{as in} \eqref{eq:adaptive_process}.
If the process satisfies (strong) containment \eqref{assumption:strong_containment}, then weak containment \eqref{assumption:weak_containment} is satisfied. 
If the process satisfies (strong) diminishing adaptation \eqref{assumption:strong_da}, then weak diminishing adaptation \eqref{assumption:weak_da} is satisfied.
\end{proposition}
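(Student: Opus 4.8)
The plan is to prove the two implications separately, each by a short domination argument resting on the pointwise inequality $\rho(\cdot,\cdot)\wedge 1 \le I_{D^c}(\cdot,\cdot)$ noted just above the statement. Since $\W_{\text{TV}} = \W_{I_{D^c}}$, this inequality gives $\W_{\rho\wedge1}(\mu,\nu)\le\W_{\text{TV}}(\mu,\nu)$ for every pair of Borel probability measures $\mu,\nu$ on $\X$: any $\xi\in\C(\mu,\nu)$ satisfies $\int_{\X\times\X}\rho\wedge1\,d\xi\le\int_{\X\times\X}I_{D^c}\,d\xi$, and one takes the infimum over $\C(\mu,\nu)$. The same remark applies verbatim with $\tilde\rho$ in place of $\rho$.

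For the containment implication I would apply this with $\mu=\P_{\Gamma_t}^n(X_t,\cdot)$ and $\nu=\pi$ (so $\pi_{\Gamma_t}=\pi$, the kernels being $\pi$-stationary in the standard setup): the event that $\W_{\text{TV}}(\P_{\Gamma_t}^n(X_t,\cdot),\pi)\le\e$ for all $n\ge N$ is contained in the event that $\W_{\rho\wedge1}(\P_{\Gamma_t}^n(X_t,\cdot),\pi_{\Gamma_t})\le\e$ for all $n\ge N$, so that $M_{\e,\rho}(X_t,\Gamma_t)\le M_{\e}(\Gamma_t,X_t)$ as random variables. Hence $\Prob(M_{\e,\rho}(X_t,\Gamma_t)\ge T)\le\Prob(M_{\e}(\Gamma_t,X_t)> T-1)$ for every $t\ge0$, and taking $\sup_{t\ge0}$ followed by $T\to\infty$ converts \eqref{assumption:strong_containment} into \eqref{assumption:weak_containment}. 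Measurability of $M_{\e,\rho}(X_t,\Gamma_t)$ is routine, since its defining condition involves only countably many $n$ and $(\gamma,x)\mapsto\W_{\rho\wedge1}(\P_\gamma^n(x,\cdot),\pi)$ is Borel.

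For the diminishing-adaptation implication I would take the constant sequence $\delta_k\equiv 0$, so that (as $\tilde\rho$ is a genuine metric) the constraint set in \eqref{assumption:weak_da} collapses to the diagonal $\{(x,x):x\in\X\}$, and for all $x,y,\gamma',\gamma$ let $\xi_{x,y,\gamma',\gamma}$ be a maximal total-variation coupling of $\P_{\gamma'}(x,\cdot)$ and $\P_{\gamma}(y,\cdot)$, realized jointly measurably through the explicit construction from $\P_{\gamma'}(x,\cdot)\wedge\P_\gamma(y,\cdot)$ and the normalized residual measures. Then $\tilde\rho(x',y')\wedge1\le I_{D^c}(x',y')$ together with the defining property of the maximal coupling on the diagonal yields
\[
D_{\tilde\rho}(\Gamma_{t+1},\Gamma_t)
=\sup_{x\in\X}\int_{\X\times\X}\tilde\rho(x',y')\wedge1\;\xi_{x,x,\Gamma_{t+1},\Gamma_t}(dx',dy')
\le\sup_{x\in\X}\W_{\text{TV}}\!\left(\P_{\Gamma_{t+1}}(x,\cdot),\P_{\Gamma_t}(x,\cdot)\right),
\]
whose right-hand side is $\H_{t+1}$-measurable and tends to $0$ by \eqref{assumption:strong_da}; since $D_{\tilde\rho}(\Gamma_{t+1},\Gamma_t)\ge 0$, it also tends to $0$, a fortiori in probability, which is \eqref{assumption:weak_da}.

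The only genuine obstacle is the measurability bookkeeping in the second part: one must check that $(x,\gamma',\gamma)\mapsto\xi_{x,x,\gamma',\gamma}$ can be taken Borel and, consequently, that $(\gamma',\gamma)\mapsto D_{\tilde\rho}(\gamma',\gamma)$ — a supremum over $x\in\X$ of an integral of the bounded lower-semicontinuous function $\tilde\rho\wedge1$ — is Borel, so that $D_{\tilde\rho}(\Gamma_{t+1},\Gamma_t)$ is $\H_{t+1}$-measurable. This is handled by the same measurable-selection machinery already invoked for the optimal couplings in the preceding proposition \citep[Corollary 5.22]{Villani2008}; everything else reduces to the elementary domination inequalities above.
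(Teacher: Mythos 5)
Your proof is correct and uses the same key idea the paper gestures at in the remark preceding the proposition (the paper gives no separate proof, only the observation that $\rho\wedge 1 \le I_{\{x\neq y\}}$ and the claim that this yields the result). For containment you correctly reduce to the pointwise domination $\W_{\rho\wedge 1}\le\W_{\text{TV}}$, giving $M_{\e,\rho}\le M_\e$ path-by-path; for diminishing adaptation the choice $\delta_k\equiv 0$ collapses the constraint to the diagonal and the domination $\tilde\rho\wedge 1\le I_{\{x'\neq y'\}}$ does the rest, and your acknowledgment that one must check joint measurability of the chosen coupling is the right thing to flag — one marginally cleaner route, consistent with the measurable-selection citation the paper uses elsewhere, is to take the $\W_{\tilde\rho\wedge1}$-optimal coupling from \citep[Corollary 5.22]{Villani2008} instead of the maximal total-variation coupling, since then the integral equals $\W_{\tilde\rho\wedge1}(\P_{\Gamma_{t+1}}(x,\cdot),\P_{\Gamma_t}(x,\cdot))\le\W_{\text{TV}}(\P_{\Gamma_{t+1}}(x,\cdot),\P_{\Gamma_t}(x,\cdot))$ and only the supremum over $x$ remains to be handled, which is exactly the object whose measurability is assumed in \eqref{assumption:strong_da}.
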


\begin{figure}
\begin{center}
\includegraphics{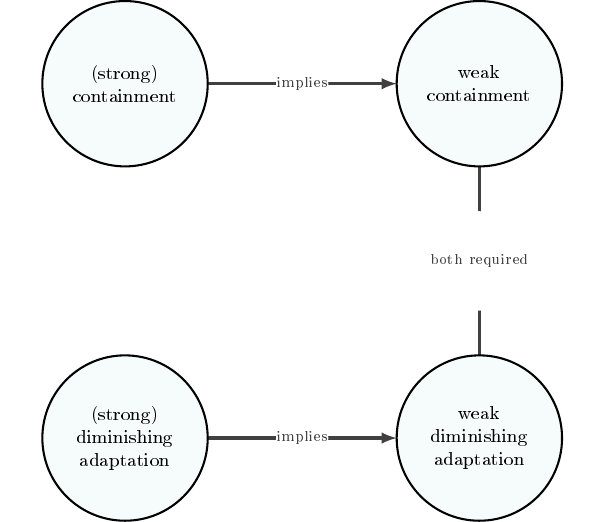}
\end{center}
\caption{Graph illustration for comparison of strong/weak containment and strong/weak diminishing adaptation conditions required to obtain weak convergence of adaptive MCMC.}
\label{figure:conditions}
\end{figure}

The following result shows weak convergence of the adaptive MCMC process.

\begin{theorem}
\label{theorem:convergence}
Let $(\Gamma_t, X_t)_{t \ge 0}$ be an adaptive process with initialization probability measure $\mu$ \hl{as in} \eqref{eq:adaptive_process}.
If weak containment \eqref{assumption:weak_containment} holds and weak diminishing adaptation \eqref{assumption:weak_da} holds, then
\[
\lim_{t\to\infty} \W_{\rho \wedge 1}( \mu\A^{(t)}, \pi )
= 0.
\]
\end{theorem}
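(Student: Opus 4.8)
The plan is to adapt the coupling argument of \citep[Theorem 13]{roberts:rosenthal:2007} to the weak / Wasserstein setting. Fix $\e \in (0,1)$; it suffices to show $\W_{\rho \wedge 1}(\mu\A^{(t)}, \pi) \le C\e$ for all $t$ large, with $C$ an absolute constant. Choose $T = T(\e)$ from weak containment \eqref{assumption:weak_containment} so that $\sup_{t \ge 0}\Prob(M_{\e,\rho}(X_t, \Gamma_t) \ge T) \le \e$. The strategy is to compare, for large $t$, the true adaptive process run from time $t-T$ to time $t$ against a process that \emph{freezes} the tuning parameter at its value $\Gamma_{t-T}$ and thereafter evolves as the non-adapted Markov chain $\P_{\Gamma_{t-T}}$ for the remaining $T$ steps. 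On the event $\{M_{\e,\rho}(X_{t-T},\Gamma_{t-T}) < T\}$ the frozen process is within $\e$ of $\pi$ in $\W_{\rho\wedge 1}$ (in expectation, using measurability of the optimal coupling \citep[Corollary 5.22]{Villani2008} and the tower property, exactly as in the Proposition proved above), on the complementary event we pay at most $1$ since $\rho \wedge 1 \le 1$, and this complementary event has probability $\le \e$. So the whole burden reduces to bounding the discrepancy between the true process and the frozen process over $T$ steps.

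To control that discrepancy I would build an explicit coupling of the two processes step by step using the conditional coupling $\xi_{x,y,\gamma',\gamma}$ supplied by weak diminishing adaptation \eqref{assumption:weak_da}. Run both chains on the same probability space; at step $j$ (for $j = 1, \dots, T$) the true chain is at $X$ with parameter $\Gamma_{t-T+j}$ while the frozen chain is at $Y$ with parameter $\Gamma_{t-T}$, and we advance the pair according to $\xi_{X, Y, \Gamma_{t-T+j}, \Gamma_{t-T}}$. The key one-step estimate is that the expected $\tilde\rho \wedge 1$ distance after the step is bounded, up to a small additive error, by the expected $\tilde\rho \wedge 1$ distance before the step plus a term governed by $D_{\tilde\rho}$. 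Concretely: condition on the current separation; if $\tilde\rho(X,Y) \le \delta_k$ then the definition of $D_{\tilde\rho}$ in \eqref{assumption:weak_da} gives that the post-step expected distance is at most $D_{\tilde\rho}(\Gamma_{t-T+j}, \Gamma_{t-T}) + o_k(1)$; if $\tilde\rho(X,Y) > \delta_k$ we use the trivial bound $1 \le \tilde\rho(X,Y)/\delta_k$ to absorb the contribution into a multiplicative constant $1/\delta_k$ times the current distance. Summing the $T$ one-step bounds (a finite geometric-type sum, since $T$ is fixed once $\e$ is fixed) yields $\E[\tilde\rho(X_t^{\text{true}}, Y_t^{\text{frozen}}) \wedge 1] \le $ (const depending on $T, \delta$) $\times \sum_{j} \E[D_{\tilde\rho}(\Gamma_{t-T+j},\Gamma_{t-T})] + $ (small constant). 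Here I also need the telescoping observation $D_{\tilde\rho}(\Gamma_{t-T+j}, \Gamma_{t-T}) \le \sum_{i=1}^{j} D_{\tilde\rho}(\Gamma_{t-T+i}, \Gamma_{t-T+i-1})$ (triangle-type inequality for the conditional coupling functional — composing couplings), so that everything is expressed through the increments $D_{\tilde\rho}(\Gamma_{s+1},\Gamma_s)$ that go to $0$ in probability by \eqref{assumption:weak_da}. Since there are only finitely many ($\le T^2$) such increment terms and each tends to $0$ in probability and is bounded by $1$, their sum tends to $0$ in probability, hence in $L^1$ by bounded convergence; thus for $t$ large the discrepancy expectation is $\le \e$.

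Finally I assemble the pieces with the triangle inequality for $\W_{\rho\wedge 1}$, noting $\W_{\rho\wedge 1} \le \W_{\tilde\rho\wedge 1}$ since $\rho\wedge 1 \le \tilde\rho\wedge 1$: $\W_{\rho\wedge 1}(\mu\A^{(t)},\pi) \le \E[\tilde\rho(X_t^{\text{true}}, Y_t^{\text{frozen}})\wedge 1] + \E[\W_{\rho\wedge 1}(\P_{\Gamma_{t-T}}^{T}(X_{t-T},\cdot),\pi)]$, the first term $\le \e$ for $t$ large by the coupling bound, the second $\le \e + \Prob(M_{\e,\rho}(X_{t-T},\Gamma_{t-T}) \ge T) \le 2\e$ by weak containment. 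Letting $\e \downarrow 0$ finishes the proof. The main obstacle I anticipate is making the one-step coupling bound fully rigorous: the functional $D_{\tilde\rho}$ is defined as a $\limsup$ over shrinking neighborhoods $\{\tilde\rho(x,y)\le\delta_k\}$ rather than a clean pointwise bound, so I must carefully choose $k$ (depending on the current realized separation and on $\e/T$) and handle the crossover between the "close" regime (where $D_{\tilde\rho}$ controls things) and the "far" regime (where I spend a factor $1/\delta_k$), and verify the requisite $\H$-measurability so that all conditional expectations and the tower property are legitimate — this is where the bulk of the technical care goes, the rest being the finite summation and two applications of the triangle inequality.
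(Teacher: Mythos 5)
Your proposal follows essentially the same route as the paper: the "frozen" process you compare against is precisely the paper's finite-adaptation process $B^{(T,N)}$, your weak-containment argument reproduces the paper's Lemma~\ref{lemma:convergence_wc}, your step-by-step conditional-coupling control of the discrepancy reproduces Lemma~\ref{lemma:convergence_weak_da} (the paper's \eqref{eq:triangle_ub} is exactly your telescoping over adjacent increments $D_{\tilde\rho}(\Gamma_{s+1},\Gamma_s)$, made possible by reducing to the optimal coupling so that the diagonal $\{x=y\}$ lies in every $\{\tilde\rho\le\delta_k\}$-neighborhood), and the final assembly is the same triangle-inequality split. The one place your bookkeeping genuinely diverges from the paper's is in the "far regime'': you propagate an expectation bound with a multiplicative $1/\delta_k$ cost per step, whereas the paper instead constructs a backwards-recursive sequence $\delta_0\le\cdots\le\delta_N$ and high-probability events $E^{(T,N)}$ and propagates closeness via Markov's inequality conditionally on $E$, which avoids the exponential $(1/\delta_k)^T$ blowup in the constant you would need to control — so your argument is workable but requires the additional observation that the fixed constant can be absorbed because $\E[D_{\tilde\rho}]\to 0$ after $T,k$ are fixed, which is implicit in your writeup but worth making explicit.
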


We will prove Thereom~\ref{theorem:convergence} through subsequent lemmas by comparing the adaptive process to an adaptive process where adaptation stops at a finite time.
The first result shows that weak containment ensures the convergence of the finite adaptation process to the target measure uniformly in the finite adaptation stopping time.

\begin{lemma}
\label{lemma:convergence_wc}
If weak containment holds \eqref{assumption:weak_containment}, then for any $\gamma, x \in \Y \times \X$,
\[
\lim_{n \to \infty} \sup_{T \ge n} \W_{\rho \wedge 1}\left[ B^{(T, n - T)}( (\gamma, x), \cdot), \pi \right]
= 0.
\]
\end{lemma}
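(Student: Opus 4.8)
The plan is to exploit the fact that, up to time $T$, the finite-adaptation process $(Y_t)_{t\le T}$ agrees with $(X_t)_{t\le T}$, and beyond time $T$ it runs the \emph{fixed} kernel $\P_{\Gamma_T}$. So conditionally on $\H_T$ (equivalently on $(\Gamma_T, Y_T) = (\Gamma_T, X_T)$), the law of $Y_n$ for $n \ge T$ is exactly $\P_{\Gamma_T}^{\,n-T}(X_T, \cdot)$. Hence, using that an optimal coupling for $\W_{\rho\wedge 1}$ exists and is jointly Borel measurable \citep[Theorem 4.1 and Corollary 5.22]{Villani2008} (so the map $(\gamma,x)\mapsto \W_{\rho\wedge 1}(\P_\gamma^{\,m}(x,\cdot),\pi)$ is Borel), one gets the bound
\[
\W_{\rho \wedge 1}\!\left[ B^{(T, n-T)}((\gamma,x),\cdot),\, \pi \right]
\;\le\; \E\!\left[ \W_{\rho\wedge 1}\!\left( \P_{\Gamma_T}^{\,n-T}(X_T,\cdot),\, \pi \right) \right],
\]
by conditioning on $\H_T$ and convexity of $\W_{\rho\wedge 1}$ (Jensen). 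This reduces the statement to controlling the right-hand side uniformly over $T \ge n$ as $n \to \infty$; note $n - T \le 0$ is vacuous, so really we only care about $T \le n$, and the claim as stated is $\sup_{T\ge n}$, meaning we take $n$ as the \emph{floor} of the running time — so the quantity of interest is $\W_{\rho\wedge 1}(\P_{\Gamma_T}^{\,m}(X_T,\cdot),\pi)$ with $m = n - T$ and we want smallness whenever $n$ is large, i.e. whenever $m$ is large \emph{or} $T$ is large with $m \ge 0$; the honest reading is that for $T \ge n$ we reindex so the elapsed Markov time is at least... — I would restate this cleanly at the top of the proof as: fix $\e$, show $\sup_{T}\W_{\rho\wedge1}(B^{(T,m)}((\gamma,x),\cdot),\pi) \le \e$ for all $m$ large, uniformly in $T$.

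The core estimate is then: for fixed $\e \in (0,1)$, split according to whether the adaptive chain has ``mixed'' by time $T$, i.e. whether $M_{\e,\rho}(X_T,\Gamma_T) \le m$. On the event $\{M_{\e,\rho}(X_T,\Gamma_T) \le m\}$ we have $\W_{\rho\wedge1}(\P_{\Gamma_T}^{\,m}(X_T,\cdot),\pi)\le\e$ directly from the definition of $M_{\e,\rho}$; on the complement we bound $\W_{\rho\wedge1}\le 1$ since $\rho\wedge1\le1$. This gives
\[
\E\!\left[ \W_{\rho\wedge1}\!\left(\P_{\Gamma_T}^{\,m}(X_T,\cdot),\pi\right)\right]
\;\le\; \e + \Prob\!\left( M_{\e,\rho}(X_T,\Gamma_T) > m \right)
\;\le\; \e + \sup_{t\ge0}\Prob\!\left( M_{\e,\rho}(X_t,\Gamma_t) > m \right).
\]
By weak containment \eqref{assumption:weak_containment}, the last supremum tends to $0$ as $m\to\infty$, uniformly in $T$ (the supremum is already over all $t$, in particular over all values $T$ can take). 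Hence for all $m$ large enough the whole expression is $\le 2\e$, and since $\e$ was arbitrary the limit is zero. Taking the supremum over $T$ before passing to the limit in $m$ is harmless because the bound $\e + \sup_{t\ge0}\Prob(\cdots > m)$ does not depend on $T$.

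The main obstacle is essentially bookkeeping rather than analysis: one must be careful that $M_{\e,\rho}$ involves convergence to $\pi_{\Gamma_t}$ whereas the statement of the lemma targets $\pi$ — here I would simply note that in the present setting $\pi_\gamma \equiv \pi$ for all $\gamma$ (the target is fixed and invariant for every $\P_\gamma$), so the two coincide; alternatively, if the paper intends $\pi_\gamma$ to genuinely vary, an extra triangle-inequality term $\W_{\rho\wedge1}(\pi_{\Gamma_T},\pi)$ must be added and assumed to vanish, which I'd flag. Beyond that, the only genuine technical point is the measurability needed to write the conditional expectation and apply Jensen, which is supplied by the cited measurable-selection results for optimal couplings \citep[Corollary 5.22]{Villani2008}; everything else is the two-line union bound above.
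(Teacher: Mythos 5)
Your proof is correct and follows essentially the same route as the paper's: reduce (via a coupling/conditional-expectation bound through $(\Gamma_T,X_T)$, with measurability from \citep[Corollary 5.22]{Villani2008}) to bounding $\E[\W_{\rho\wedge1}(\P_{\Gamma_T}^{\,m}(X_T,\cdot),\pi)]$, then split on the event $\{M_{\e,\rho}(X_T,\Gamma_T)\le m\}$ and apply weak containment uniformly in $T$; the paper simply writes the conditional step as an explicit glued coupling $\hat\xi^{(T+n)}_{\gamma,x}$ rather than invoking Jensen. Your two flags are both real issues in the paper's statement: the index $n-T$ with $T\ge n$ is a typo (the intended quantity is $B^{(T,n)}$, i.e.\ $n$ post-adaptation Markov steps, uniformly over $T$), and the $\pi_{\Gamma_t}$ appearing in the weak-containment definition is used as $\pi$ in this proof, so either $\pi_\gamma\equiv\pi$ is implicitly assumed or an extra $\W_{\rho\wedge1}(\pi_{\Gamma_T},\pi)$ term must be controlled.
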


\begin{proof}
Fix $\e \in(0, 1)$.
For any $\gamma, x \in \Y \times \X$ and each $n \in \Z_+$, the infimum is attained at an optimal coupling $\xi_{x, \gamma}^{(n)} \in \C[\P_{\gamma}^n(x, \cdot), \pi]$ \citep[Theorem 4.1]{Villani2008} so that
\begin{align}
\W_{\rho \wedge 1}\left[\P_{\gamma}^n(x, \cdot), \pi \right]
= \int_{\X^2} \rho(x', y') \wedge 1 \xi^{(n)}_{\gamma, x}(dx', dy').
\end{align}
The coupling is Borel measurable due to $\rho(\cdot, \cdot) \wedge 1$ being lower semi-continuous and can be approximated by a non-decreasing sequence of bounded Lipschitz functions so we can choose a measurable selection \citep[Corollary 5.22]{Villani2008} such that the limit is Borel measurable using approximation techniques in \citep[Theorem 1.3]{Villani2003}.
Define the set $A_\e = \{ \gamma, x \in \Y \times \X : M_{\e, \rho}(\gamma, x) \le N \}$.
For all $\gamma, x \in A_\e$ and for all $n \ge N$,
\begin{align}
\W_{\rho \wedge 1}\left[\P_{\gamma}^n(x, \cdot), \pi \right]
&\le \e.
\label{eq:weak_containment_bound}
\end{align}

Let $\nu_{\gamma, x}^{(T)}$ denote the probability measure for $( X_{T}, \Gamma_{T} )$ given $\Gamma_0, X_0 = \gamma, x$.
Then
\[
\hat{\xi}^{(T + n)}_{\gamma, x}(dx_{T + n}, dy) = 
\int_{\Y \times \X} \xi_{\gamma_{T}, y_{T}}^{(n)}(dx', dy') \nu^{(T)}_{\gamma, x}(\gamma_{T}, y_{T})
\]
defines a coupling for the finite adaptation process $Y_{T + n} \sim B^{(T, n)}((\gamma, x), \cdot)$ and $Y \sim \pi$ \citep[Theorem 4.8]{Villani2008}.
By the weak containment assumption \eqref{assumption:weak_containment}, there is an $N$ depending on $\e$ such that uniformly in $T \ge 0$, 
$
\nu^{(T)}_{\gamma, x}(A_{\e}^c)
= \Prob\left( M_{\e, \rho}(\Gamma_T, X_T) > N \right) \le \e.
$
Using \eqref{eq:weak_containment_bound}, then uniformly in $T \ge n$,
\begin{align*}
&\W_{\rho \wedge 1}\left[ B^{(T, n - T)}( (\gamma, x), \cdot), \pi \right]
\le \int_{\X^2} \rho(x', y') \wedge 1 \hat\xi_{\gamma, x}^{(T + n)}(dx', dy')
\\
&\le
\int_{\X^2} \int_{A_\e} \rho(y_{T + n}, y) \wedge 1  \xi_{\gamma_{T}, y_{T}}^{(n)}(dx', dy') \nu^{(T)}_{\gamma, x}(\gamma_{T}, y_{T})
+ \sup_{T' \ge 0} \nu^{(T')}_{\gamma, x}(A_{\e}^c)
\\
&\le 2 \e.
\end{align*}
\end{proof}

The weak diminishing adaptation condition will ensure our next goal which is to have the adaptive MCMC process converge to the finite adaptation process.

\begin{lemma}
\label{lemma:convergence_weak_da}
If weak diminishing adaptation \eqref{assumption:weak_da} holds, then for any $\gamma, x \in \Y \times \X$ and any $N \ge 0$
\[
\lim_{T \to \infty} \W_{\tilde{\rho} \wedge 1}\left( \A^{(T + N)}( (\gamma, x), \cdot), B^{(T, N)}( (\gamma, x), \cdot) \right)
= 0.
\]
\end{lemma}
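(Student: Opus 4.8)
The idea is to interpolate between the fully adaptive process and the finite-adaptation process through a finite chain of auxiliary processes, each freezing the tuning parameter one step later than the previous, and to control the discrepancy created at each interpolation step by iterating the coupling provided by weak diminishing adaptation \eqref{assumption:weak_da}.

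First I would set up the interpolants. For $i = 0, 1, \dots, N$ let $W^{(i)} = (W^{(i)}_t)_{t \ge 0}$ be the process that coincides with the adaptive process up to time $T+i$ (so $W^{(i)}_t = X_t$ for $t \le T+i$) and then runs the frozen chain $W^{(i)}_{t+1} \mid \H_{T+i}, W^{(i)}_t = w \sim \P_{\Gamma_{T+i}}(w, \cdot)$ for $t \ge T+i$. Then $W^{(0)}_{T+N}$ has law $B^{(T,N)}((\gamma,x),\cdot)$ and $W^{(N)}_{T+N} = X_{T+N}$ has law $\A^{(T+N)}((\gamma,x),\cdot)$, so the triangle inequality for $\W_{\tilde\rho\wedge1}$ gives
\[
\W_{\tilde\rho\wedge1}\!\left(\A^{(T+N)}((\gamma,x),\cdot),\, B^{(T,N)}((\gamma,x),\cdot)\right)
\;\le\; \sum_{i=0}^{N-1}\W_{\tilde\rho\wedge1}\!\left(\Law\big(W^{(i+1)}_{T+N}\big),\, \Law\big(W^{(i)}_{T+N}\big)\right).
\]
Conditioning on $(\Gamma_{T+i},\Gamma_{T+i+1},X_{T+i})$, the two laws in the $i$-th summand are the laws of $m := N-i$ iterations of the frozen kernels $\P_{\Gamma_{T+i+1}}$ and $\P_{\Gamma_{T+i}}$ started at the common point $X_{T+i}$, so by convexity of the Wasserstein distance under mixtures \citep[Theorem 4.8]{Villani2008} it suffices to bound $\E\big[\W_{\tilde\rho\wedge1}\big(\P_{\Gamma_{T+i+1}}^{m}(X_{T+i},\cdot),\,\P_{\Gamma_{T+i}}^{m}(X_{T+i},\cdot)\big)\big]$ for each $i$. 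The essential point is that $\Gamma_{T+i+1}$ and $\Gamma_{T+i}$ are consecutive, so the conditional coupling $\xi_{\cdot,\cdot,\Gamma_{T+i+1},\Gamma_{T+i}}$ of \eqref{assumption:weak_da} is genuinely available at every one of the $m$ steps.

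For the inner estimate I would fix consecutive parameters $\gamma'=\Gamma_{T+i+1}$, $\gamma=\Gamma_{T+i}$ and a point $z=X_{T+i}$ and build a coupling by iteration: set $(a_0,b_0)=(z,z)$ and, given $(a_{j-1},b_{j-1})$, draw $(a_j,b_j)\sim\xi_{a_{j-1},b_{j-1},\gamma',\gamma}$. Since $\xi_{a,b,\gamma',\gamma}$ has marginals $\P_{\gamma'}(a,\cdot)$ and $\P_{\gamma}(b,\cdot)$, induction gives $a_m\sim\P_{\gamma'}^m(z,\cdot)$ and $b_m\sim\P_{\gamma}^m(z,\cdot)$, hence $\W_{\tilde\rho\wedge1}\big(\P_{\gamma'}^m(z,\cdot),\P_{\gamma}^m(z,\cdot)\big)\le\E[\tilde\rho(a_m,b_m)\wedge1]$. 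Write $S_k(\gamma',\gamma):=\sup_{\{x,y:\tilde\rho(x,y)\le\delta_k\}}\int_{\X\times\X}\tilde\rho(x',y')\wedge1\,\xi_{x,y,\gamma',\gamma}(dx',dy')$, the quantity inside the limit in \eqref{assumption:weak_da}. As long as the coupled pair stays inside $\{\tilde\rho(a_j,b_j)\le\delta_k\}$, each step costs at most $S_k(\gamma',\gamma)$ in expected $(\tilde\rho\wedge1)$-distance; letting $\tau$ be the first index $j$ with $\tilde\rho(a_j,b_j)>\delta_k$ (note $\tau\ge1$ since $a_0=b_0$), a union bound over the $m$ steps together with Markov's inequality — using $\tilde\rho(a_j,b_j)\wedge1\ge\delta_k\,\mathbf{1}\{\tilde\rho(a_j,b_j)>\delta_k\}$ for $\delta_k\in(0,1)$ — gives $\Prob(\tau\le m)\le m\,S_k(\gamma',\gamma)/\delta_k$, while on $\{\tau>m\}$ one has $\tilde\rho(a_m,b_m)\wedge1\le\delta_k$. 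Consequently, uniformly in $z$ and for every $k$ with $\delta_k\in(0,1)$,
\[
\W_{\tilde\rho\wedge1}\!\left(\P_{\gamma'}^m(z,\cdot),\P_{\gamma}^m(z,\cdot)\right)\;\le\;\delta_k+\frac{m}{\delta_k}\,S_k(\gamma',\gamma),
\]
and substituting into the interpolation bound and summing over $i$,
\[
\W_{\tilde\rho\wedge1}\!\left(\A^{(T+N)}((\gamma,x),\cdot),\,B^{(T,N)}((\gamma,x),\cdot)\right)\;\le\;N\delta_k+\frac{N^2}{\delta_k}\,\max_{0\le i<N}\E\!\left[S_k(\Gamma_{T+i+1},\Gamma_{T+i})\right]
\]
for every such $k$. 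Given $\e>0$, I would first choose $k$ with $\delta_k\in(0,1)$ and $N\delta_k<\e/2$; weak diminishing adaptation \eqref{assumption:weak_da} makes $S_k(\Gamma_{t+1},\Gamma_t)\to0$ in probability as $t\to\infty$, so with $k$ now fixed and $S_k\le1$, bounded convergence drives the second term below $\e/2$ for all sufficiently large $T$, which completes the proof.

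The hard part is this inner estimate, and it is exactly where the weak-convergence setting parts company with the total variation one. There the diminishing adaptation coupling may be taken maximal, so coupled copies stay literally equal with high probability and never make an excursion away from the diagonal; here the iterated coupling may drift, and weak diminishing adaptation controls it only within a $\delta_k$-neighborhood of the diagonal. Localizing to that neighborhood, estimating the excursion probability by Markov's inequality at the cost of a factor $1/\delta_k$, and only afterwards — with $N$ fixed — sending $\delta_k$ to zero, is the device that absorbs this difficulty. The remaining points (Borel measurability of the iterated coupling and of $S_k(\Gamma_{t+1},\Gamma_t)$ in $\H_{t+1}$, and the measurable selection of optimal couplings underlying the mixture bound) are routine and can be dispatched as in the proof of Lemma~\ref{lemma:convergence_wc}.
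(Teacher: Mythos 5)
Your proof is correct but takes a genuinely different route from the paper's. The paper constructs a single joint coupling $\zeta_{\gamma_0,x_0}$ of the two processes' full trajectories: they agree up to time $T$, and thereafter the conditional couplings $\xi^*_{x_{s-1},y_{s-1},\gamma_s,\gamma_T}$ of $\P_{\gamma_s}$ (the adaptive kernel) against $\P_{\gamma_T}$ (the frozen kernel) are glued on. Since these two parameters are not consecutive, the paper controls the one-step cost by a telescoping triangle inequality \eqref{eq:triangle_ub} through $\gamma_{T+1},\ldots,\gamma_{T+k}$, and tracks the drift of the coupled pair with a backward-constructed nested sequence $\delta_0<\delta_1<\cdots<\delta_N$ and a single good event $E$ of probability at least $1-\e$ on which $\rho(x_{T+k},y_{T+k})\le\delta_k$ at every step. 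You instead interpolate between $\A^{(T+N)}$ and $B^{(T,N)}$ through the chain $W^{(0)},\ldots,W^{(N)}$, each freezing the parameter one step later; after disintegration and convexity of $\W_{\tilde\rho\wedge1}$ under mixtures, each of the $N$ pairwise comparisons involves only $m=N-i$ iterates of the two \emph{consecutive} kernels $\P_{\Gamma_{T+i}}$ and $\P_{\Gamma_{T+i+1}}$ from the common start $X_{T+i}$, which you control by iterating the WDA coupling and a hitting-time/Markov bound $\Prob(\tau\le m)\le m\,S_k/\delta_k$ at a single scale $\delta_k$. The advantage of your decomposition is that every Wasserstein comparison involves only kernels with consecutive tuning parameters — exactly what \eqref{assumption:weak_da} controls — so the telescoping in \eqref{eq:triangle_ub} and the nested $\delta$'s collapse to a single scale plus a clean excursion estimate; the paper's construction in exchange produces an explicit joint coupling of the two whole paths, which is closer in spirit to the classical total-variation argument. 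One shared subtlety: both proofs pass from the assumption that $\lim_k S_k(\Gamma_{t+1},\Gamma_t)\to0$ in probability to the statement that, for some fixed $k$, $S_k(\Gamma_{t+1},\Gamma_t)\to0$ in probability as $t\to\infty$ (you state this explicitly; the paper's choice of a $\delta_{k-1}$ working uniformly over $t\ge T$ is the same move), and since $S_k$ decreases to $D_{\tilde\rho}$ pointwise but not uniformly in $t$, this interchange deserves comment — but it is a reading of the weak diminishing adaptation condition made identically in both proofs, not a gap in yours relative to the paper's.
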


\begin{proof}
It will suffice to assume $\tilde\rho = \rho$ and the optimal coupling in the weak diminishing adaptation assumption \eqref{assumption:weak_da}.
Fix $N \ge 1$ and fix $\e \in(0, 1)$.
For each $\gamma, \gamma'$ and each $x, y$, there exists a Borel measurable optimal coupling $\xi_{x, y, \gamma', \gamma}^*$ so that
\[
\W_{\rho \wedge 1}\left[ 
\P_{\gamma'}(x, \cdot), \P_{\gamma}(y, \cdot)
\right]
= \int_{\X^2} \rho(x', y') \wedge 1 \xi_{x, y, \gamma', \gamma}^*(dx', dy').
\]
Using these conditional couplings, we define a joint probability measure $\zeta_{\gamma_0, x_0}$ by
\begin{align*}
&\zeta_{\gamma_0, x_0}(dx_{1}, d\gamma_1, dy_{1}, \ldots, dx_{T + N}, d\gamma_{T + N}, dy_{T + N} )
\\
&= \prod_{s = 1}^{T} \P_{\gamma_s}(x_{s-1}, dx_s) \Q_{s}(h_{s-1}, d\gamma_s) \hlmath{\delta_{x_1, \ldots, x_s}}(dy_1, \ldots, dy_{s})
\\
&\hspace{1.2em}\prod_{s = T + 1}^{T + N} \xi_{x_{s-1}, y_{s-1}, \gamma_s, \gamma_T}^*(dx_{s}, dy_{s}) \Q_{s}(h_{s-1}, d\gamma_s)
\end{align*}
where for $0 \le s \le t$, the history $h_s = (\gamma_0, x_0, \ldots, \gamma_s, x_s)$.
The marginal is a coupling $\zeta_{\gamma_0, x_0}(dx_{t}, dy_{t} )$ for the adaptive process $X_t | \Gamma_0, X_{0} = \gamma, x$ and the finite adaptation process $Y_t | \Gamma_0, Y_{0} = \gamma, x$ initialized so that they are identical up to time $T$ and use conditional couplings thereafter.

For $\gamma', \gamma \in \Y$ and $\delta \in (0, 1)$, define 
\[
D_{\rho, \delta}(\gamma', \gamma)
=
\sup_{ \{ x, y : \rho(x, y) \le \delta \} } 
\W_{\rho \wedge 1}\left( \P_{\gamma'}(x, \cdot), \P_{\gamma}(y, \cdot) \right).
\]
For any $\e', \delta' \in (0, 1)$ and $k \in \Z_+$, define the set 
 \[
 E^{(T, N)}_{\e', \delta'} = \left\{ \gamma_{T + 1}, \ldots, \gamma_{T + N} : 
D_{\rho, \delta'}(\gamma_{t + 1}, \gamma_{t})
\le \e'/N^2, T + 1 \le t \le T + N -1 \right\}.
\]
Starting with $\delta_N = r \in (0, 1)$, then for each $1 \le k \le N$, given $\delta_k \in (0, 1)$, by weak diminishing adaptation \eqref{assumption:weak_da}, we can choose $T$ large enough depending on $\e, N, \delta_k$ and we can choose $\delta_{k - 1}$ sufficiently small so that for all $t \ge T$, $D_{\rho, \delta}(\Gamma_{t + 1}, \Gamma_t)$ is $\H_{t + 1}$ measurable and
\[
\Prob\left( 
D_{\rho, \delta_{k - 1}}(\Gamma_{t + 1}, \Gamma_{t})
> \delta_{k}\e/N^2
\right)
\le \e/N^2.
\]
This constructs $\delta_0, \ldots, \delta_N = r$ so that using a union probability bound, we can choose $T$ sufficiently large depending on $\e, N, \delta_1, \ldots, \delta_N$ so that for each $1 \le k \le N$, we have
$
\Prob\left( E^{(T, N)}_{\e \delta_k, \delta_{k - 1}} \right)
\ge 1 - \e/N. 
$
Define $E = \bigcap_{k = 1}^N E^{(T, N)}_{\delta_k \e, \delta_{k - 1}}$ and a union probability bound then implies
\[
\Prob(E)
= \Prob\left( \bigcap_{k = 1}^N E^{(T, N)}_{\delta_k \e, \delta_{k - 1}} \right)
\ge 1 - \e. 
\]

The triangle inequality for the Wasserstein distance \citep[Lemma 7.6]{Villani2003} holds for every $1 \le k \le N$ with
\begin{align}
&\W_{\rho \wedge 1}\left[ 
\P_{\gamma_{T + k}}(x, \cdot), \P_{\gamma_{T}}(y, \cdot)
\right]
\\
&\le
\W_{\rho \wedge 1}\left[ 
\P_{\gamma_{T + 1}}(x, \cdot), \P_{\gamma_{T}}(y, \cdot)
\right]
+ \sum_{s = 1}^{N - 1} 
\W_{\rho \wedge 1}\left[ 
\P_{\gamma_{T + s + 1}}(x, \cdot), \P_{\gamma_{T} + s}(x, \cdot)
\right].
\label{eq:triangle_ub}
\end{align}
For each $k$, if $\gamma_{T + 1}, \ldots, \gamma_{T + k} \in E^{(T, N)}_{\e \delta_{k}, \delta_{k - 1} }$, then by \eqref{eq:triangle_ub} and Markov's inequality,
\[
\inf_{\{ \rho(x, y) \le \delta_{k - 1} \} }\W_{\{ \rho(x', y') \le \delta_{k} \}}\left[ 
\P_{\gamma_{T + k}}(x, \cdot), \P_{\gamma_{T}}(y, \cdot)
\right]
\ge 1 - \frac{\e}{N}.
\]

By construction of the the distribution $\zeta$, $\rho(x_T, y_T) \le \delta_0$ regardless of how small $\delta_0$ is and we have the lower bound
\begin{align*}
\zeta\left( \bigcap_{k = 1}^{N} \{ \rho(x_{T + k}, y_{T + k}) \le \delta_k \} \bigm| E \right)
&\ge \left( 1 - \frac{\e}{N} \right)^{N}
\\
&\ge 1 - \e.
\end{align*}
Combining these results, we have
\begin{align*}
&\W_{ I_{ \{ x', y' : \rho(x', y') > r \} } } \left( \A^{(T + N)}( (\gamma, x), \cdot), B^{(T, N)}( (\gamma, x), \cdot) \right)
\\
&\le \zeta\left(
\{ \rho(x_{T + N}, y_{T + N}) > r \} 
\right)
\\
&\le \zeta\left(\{ \rho(x_{T + N}, y_{T + N}) > \delta_N \} \bigm| E \right)
+ \Prob\left( E \right)
\\
&\le 2\e.
\end{align*}
Since this holds for any $r, \e$, 
\[
\W_{ \rho \wedge 1 } \left( \A^{(T + N)}( (\gamma, x), \cdot), B^{(T, N)}( (\gamma, x), \cdot) \right)
\le r + 2\e
\]
and the conclusion follows.
\end{proof}

Combining these lemmas, we may now prove Theorem~\ref{theorem:convergence}.

\begin{proof}[Proof of Theorem~\ref{theorem:convergence}]
Fix $\e \in(0, 1)$.
From Lemma~\ref{lemma:convergence_wc}, we can choose $N_\e$ sufficiently large so that for all $n \ge N_\e$ with a particular adaptation stopping time $T_n = n - N_\e \ge 0$,
\begin{align*}
\W_{\rho \wedge 1}\left[ B^{(T_n, N_\e)}( (\gamma, x), \cdot), \pi) \right]
\le \e/2.
\end{align*}
Given this $N_\e$ and using Lemma~\ref{lemma:convergence_weak_da}, we can choose $n_\e$ sufficiently large so that for all $n \ge n_\e$
\begin{align*}
\W_{\rho \wedge 1}\left[
\A^{(T_n + N_\e)}( (\gamma, x), \cdot), B^{(T_n, N_\e)}( (\gamma, x), \cdot) \right]
\le \e/2.
\end{align*}
The triangle inequality holds by \citep[Lemma 7.6]{Villani2003} so that
\begin{align*}
&\W_{\rho \wedge 1}\left[ \A^{(n)}( (\gamma, x), \cdot), \pi \right]
\\
&\le \W_{\rho \wedge 1}\left[
\A^{(T_n + N_\e)}( (\gamma, x), \cdot), B^{(T_n, N_\e)}( (\gamma, x), \cdot) \right]
+ \W_{\rho \wedge 1}\left[ B^{(T_n, N_\e)}( (\gamma, x), \cdot), \pi \right]
\\
&\le \e.
\end{align*}
Since the conditional optimal coupling is attained and is Borel measurable \citep[Theorem 4.8]{Villani2008}, we have by dominated convergence
\begin{align*}
\lim_{t \to \infty} \W_{\rho \wedge 1}\left[ 
\mu \A^{(n)}, \pi 
\right]
\le \lim_{t \to \infty} \int_{\Y \times \X} \W_{\rho \wedge 1}\left[ 
\A^{(n)}( (\gamma, x), \cdot), \pi 
\right] \mu(d\gamma, dx)
= 0.
\end{align*}
\end{proof}

Interestingly we do not assume that $\pi$ is invariant. 
Denote the distance to a closed set $C \subseteq \X$ by $\rho(x, C) = \inf_{y \in C} \rho(x, y)$ and the $\e$-inflation of the set by $C^\e = \{ x \in X : \rho(x, C) \le \e \}$.
Theorem~\ref{theorem:convergence} and Strassen's theorem \citep{strassen:1965} ensures uniformly for any closed Borel measurable set $C \subseteq \X$,
$
\mu\A^{(t)}(C)
\to \pi(C^\e).
$
Theorem~\ref{theorem:convergence} also ensures that for every bounded $\rho$-Lipschitz Borel measurable function $\phi : \X \to \R$,
\[
\int_{\X} \phi d\mu\A^{(t)}
\to \int_{\X} \phi d\pi.
\]

The following extends Theorem~\ref{theorem:convergence} to $L^p$-Wasserstein distances with unbounded metrics.

\begin{proposition}
\label{proposition:convergence_wasserstein}
Suppose an adaptive process $(\Gamma_t, X_t)_{t \ge 0}$ with initialization probability measure $\mu$ \eqref{eq:adaptive_process} satisfies weak containment \eqref{assumption:weak_containment} and weak diminishing adaptation \eqref{assumption:weak_da}.
Suppose further for some $x_0 \in \X$ and $p \in \Z_+$, 
$\int \rho(x, x_0)^p \pi(dx) < \infty$.
Then the following are equivalent:

\begin{enumerate}
\item 
\label{equiv:convergence}
Convergence in the $L^p$-Wasserstein distance holds:
$
\lim_{t \to \infty} 
\W_{\rho, p}\left(
\mu\A^{(t)}, \pi
\right)
= 0.
$

\item The sequence $( \rho(X_t, x_0)^p )_{t \ge 0}$ is uniformly integrable:
\[
\lim_{R \to \infty} \sup_{t \ge 0} \int_{ \rho(x, x_0) > R } \rho(x, x_0)^p \mu\A^{(t)}(dx)
= 0.
\]

\noindent If $(\X, \rho)$ is a complete separable metric space, then the following are also equivalent to \eqref{equiv:convergence}:

\item
$
\lim_{t \to \infty} \int_{\X} \rho(x, x_0)^p \mu\A^{(t)}(dx)
=  \int_{\X} \rho(x, x_0)^p \pi(dx)
$

\item 
$
\limsup_{t \to \infty} \int_{\X} \rho(x, x_0)^p \mu\A^{(t)}(dx)
\le  \int_{\X} \rho(x, x_0)^p \pi(dx)
$
\end{enumerate}
\end{proposition}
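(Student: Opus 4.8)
The statement is the adaptive-MCMC analogue of the classical characterization of $L^p$-Wasserstein convergence in terms of weak convergence together with control of $p$-th moments (cf.\ \citep[Theorem 6.9]{Villani2008}). The one ingredient that is not classical is already available: since the process satisfies weak containment and weak diminishing adaptation, Theorem~\ref{theorem:convergence} gives $\W_{\rho\wedge1}(\mu\A^{(t)},\pi)\to0$, and when $(\X,\rho)$ is complete separable this is weak convergence of $\mu\A^{(t)}$ to $\pi$ in the $\rho$-topology. Write $m_t:=\int_\X\rho(x,x_0)^p\,\mu\A^{(t)}(dx)$ and $m_\pi:=\int_\X\rho(x,x_0)^p\,\pi(dx)<\infty$. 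The plan is to prove the cycle $(\ref{equiv:convergence})\Rightarrow(2)\Rightarrow(\ref{equiv:convergence})$ directly, which needs only that $\X$ is Polish under its reference metric so that optimal couplings for the lower semicontinuous costs $\rho^p$ and $(\rho\wedge M)^p$ exist and are Borel measurable \citep[Theorem 4.1, Corollary 5.22]{Villani2008}, and then, under the additional completeness/separability hypothesis, to close the loop $(\ref{equiv:convergence})\Rightarrow(3)\Rightarrow(4)\Rightarrow(2)$.

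For $(\ref{equiv:convergence})\Rightarrow(2)$: take $\xi_t\in\C(\mu\A^{(t)},\pi)$ optimal for $\W_{\rho,p}$. From $|\rho(x,x_0)-\rho(y,x_0)|\le\rho(x,y)$ and Minkowski's inequality in $L^p(\xi_t)$ one gets $|m_t^{1/p}-m_\pi^{1/p}|\le\W_{\rho,p}(\mu\A^{(t)},\pi)\to0$, so in particular $\sup_t m_t<\infty$. For the tails, on $\{\rho(x,x_0)>R\}$ bound $\rho(x,x_0)^p\le2^{p-1}\rho(x,y)^p+2^{p-1}\rho(y,x_0)^p$ and note that under $\xi_t$ this set is contained in $\{\rho(x,y)>R/2\}\cup\{\rho(y,x_0)>R/2\}$; integrating, the $\rho(x,y)^p$ part is at most a multiple of $\W_{\rho,p}(\mu\A^{(t)},\pi)^p$, and the $\rho(y,x_0)^p$ part is at most $2^{p-1}\int_{\{\rho(y,x_0)>R/2\}}\rho(y,x_0)^p\,\pi(dy)$ plus again a multiple of $\W_{\rho,p}(\mu\A^{(t)},\pi)^p$. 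Letting $t\to\infty$ and then $R\to\infty$, and absorbing the finitely many small-$t$ indices (each $m_t$ is finite, hence individually uniformly integrable), yields $(2)$.

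For $(2)\Rightarrow(\ref{equiv:convergence})$: set $\rho_M:=\rho\wedge M$, so $\rho_M\le M(\rho\wedge1)$ and $\rho_M^p\le M^p(\rho\wedge1)$, whence $\W_{\rho_M,p}(\mu\A^{(t)},\pi)^p\le M^p\,\W_{\rho\wedge1}(\mu\A^{(t)},\pi)\to0$ for each fixed $M$ by Theorem~\ref{theorem:convergence}. With $\xi_t^M$ an optimal coupling for $\W_{\rho_M,p}(\mu\A^{(t)},\pi)$, compare costs: $\W_{\rho,p}(\mu\A^{(t)},\pi)^p\le\int\rho^p\,d\xi_t^M\le\W_{\rho_M,p}(\mu\A^{(t)},\pi)^p+\int_{\{\rho>M\}}\rho^p\,d\xi_t^M$. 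For the remainder use $\{\rho(x,y)>M\}\subseteq\{\rho(x,x_0)>M/2\}\cup\{\rho(x_0,y)>M/2\}$ and $\rho(x,y)^p\le2^{p-1}(\rho(x,x_0)^p+\rho(x_0,y)^p)$, then split each piece once more according to whether $\rho(x,x_0)$ (resp.\ $\rho(x_0,y)$) exceeds $M/4$: on the ``large'' part one gets tail integrals bounded, uniformly in $t$, by $\sup_s\int_{\{\rho(x,x_0)>M/4\}}\rho(x,x_0)^p\,\mu\A^{(s)}(dx)$ (small by $(2)$) and by $\int_{\{\rho(x_0,y)>M/4\}}\rho(x_0,y)^p\,\pi(dy)$ (small since $m_\pi<\infty$); on the complementary ``small'' part, $\rho(x,x_0)^p\le(M/4)^p\le\rho_M(x,y)^p$ (as $\rho(x,y)>M/4$ there), which is re-absorbed into $\W_{\rho_M,p}(\mu\A^{(t)},\pi)^p$. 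Choosing $M$ large first to make the tail terms uniformly small, then $t$ large, gives $(\ref{equiv:convergence})$. This last split, where $\rho(x_0,y)$ may be large without $\rho(x,x_0)$ being controlled, is the only genuinely fussy point of the argument.

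Finally, when $(\X,\rho)$ is complete separable: $(\ref{equiv:convergence})\Rightarrow(3)$ is again $|m_t^{1/p}-m_\pi^{1/p}|\le\W_{\rho,p}(\mu\A^{(t)},\pi)$, and $(3)\Rightarrow(4)$ is trivial. For $(4)\Rightarrow(2)$, Theorem~\ref{theorem:convergence} gives $\mu\A^{(t)}\to\pi$ weakly in the $\rho$-topology; since $g:=\rho(\cdot,x_0)^p$ is nonnegative and lower semicontinuous, applying the portmanteau theorem \citep{dudley:2018} to the bounded lower semicontinuous function $g\wedge R$ gives $\liminf_t\int(g\wedge R)\,d\mu\A^{(t)}\ge\int(g\wedge R)\,d\pi$, and letting $R\to\infty$ yields $\liminf_t m_t\ge m_\pi$, which together with $(4)$ forces $m_t\to m_\pi$ and hence $\limsup_t\int(g-R)^+\,d\mu\A^{(t)}\le\int(g-R)^+\,d\pi$. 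Using $g\,\mathbf{1}_{\{g>2R\}}\le2(g-R)^+$ one then gets $\limsup_t\int_{\{\rho(x,x_0)^p>2R\}}\rho(x,x_0)^p\,\mu\A^{(t)}(dx)\le2\int(g-R)^+\,d\pi\to0$ as $R\to\infty$, and, adding the finitely many initial indices (each $m_t$ finite, hence uniformly integrable over any finite index set), this is precisely $(2)$. The chain $(\ref{equiv:convergence})\Rightarrow(3)\Rightarrow(4)\Rightarrow(2)\Rightarrow(\ref{equiv:convergence})$ then closes the equivalence, the main obstacle throughout being the truncation estimate in $(2)\Rightarrow(\ref{equiv:convergence})$.
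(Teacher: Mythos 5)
Your proposal is correct and, at its core, takes the same route as the paper: Theorem~\ref{theorem:convergence} supplies $\W_{\rho\wedge1}(\mu\A^{(t)},\pi)\to0$, and the upgrade to $\W_{\rho,p}$ is obtained by splitting the transport cost of a coupling at a truncation level and controlling the tail with the $\pi$-moment hypothesis and the uniform integrability in (2), while the reverse direction compares $\rho(x,x_0)^p$ with $\rho(y,x_0)^p$ across the coupling. The only differences are organizational: the paper compresses the argument by invoking the Kallenberg characterization of uniform integrability under convergence in distribution to handle $(1)\Rightarrow(2)$ and the equivalences with (3), (4), and splits the cost over $\{\rho\le R\}\cup\{\rho> R\}$ directly using the $\rho\wedge1$-optimal coupling; you instead work out the tail estimates and the $(1)\Rightarrow(3)\Rightarrow(4)\Rightarrow(2)$ cycle by hand (Minkowski for the moment convergence, the portmanteau theorem with $g\wedge R$ for $(4)\Rightarrow(2)$) and use the $\rho\wedge M$-optimal coupling in $(2)\Rightarrow(1)$. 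Your version is more self-contained and makes explicit why items (3), (4) require completeness of $(\X,\rho)$ (so that $\W_{\rho\wedge1}\to0$ really gives weak convergence for the portmanteau step), which the paper leaves implicit in the Kallenberg citation; the paper's version is shorter. Both carry the same minor wrinkle that finiteness of $\int\rho(x,x_0)^p\,\mu\A^{(t)}(dx)$ for every fixed $t$ is tacitly needed for (2) to even be a sensible statement, but that is inherited from the proposition, not introduced by your argument.
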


\begin{proof}
By Theorem~\ref{theorem:convergence} and Markov's inequality, for any $\e > 0$ 
$
\inf_{\xi \in \C(\mu\A^{(t)}, \pi)} \xi( \{ \rho(u, v) > \e \} )
\to 0.
$
Let $\xi^{(t)}$ be the attained optimal coupling for each $t$ \citep[Theorem 4.1]{Villani2008}.

Assume (1) holds.
For any $\e \in (0 , 1)$,
$\lim_{t \to \infty} \xi^{(t)}( | \rho(x, x_0) - \rho(y, x_0) | \ge \e) = 0$. 
By Young's inequality, for any $\e \in (0, 1)$, there is a constant $C_{\e, p}$ depending on $p, \e$ such that
\[
\rho(x, x_0)^p
\le (1 + \e) \rho(y, x_0)^p + C_{\e, p} \rho(x, y)^p
\]
Integrating with the coupling implies that
\[
\limsup_{t \to \infty} \int_\X \rho(x, x_0)^p \mu \A^{(t)}(dx)
\le \int_\X \rho(y, x_0)^p \pi(dy).
\]
By \citep[Theorem 5.11]{kallenberg:2021}, (2) holds.

Now assume (2) holds. By convexity,
\begin{align}
&\lim\sup_{t} \int_{\rho(x, y) > R} \rho(x, y)^p \xi^{(t)}(dx, dy)
\\
&\le 2^{p - 1} \lim\sup_{t} \int_{\rho(x, y) > R} \rho(x, x_0)^p \mu \A^{(t)}(dx) + 2^{p - 1} \lim\sup_{t} \int_{\rho(x, y) > R} \rho(y, x_0)^p \xi^{(t)}(dx, dy)
\label{eq:ui_ub}
\end{align}
By the characterization of uniform integrability \citep[Theorem 5.11]{kallenberg:2021} and dominated convergence, \eqref{eq:ui_ub} tends to $0$ as $R \to 0$.
This implies
\[
\lim_{t \to \infty} \W_{\rho, p}^p\left[
\mu\A^{(t)}, \pi
\right]
= \lim_{t \to \infty} \int_{\X^2} \rho(x_t, y)^p \xi^{(t)}(dx', dy')
= 0.
\]
The remaining equivalences follow from \citep[Lemma 5.11]{kallenberg:2021}.
\end{proof}

Proposition~\ref{proposition:convergence_wasserstein} has many interesting applications to extend weak convergence of adaptive MCMC and also extending convergence in total variation of adaptive MCMC.
For example, if $\X = \R^d$, then weak convergence from Theorem~\ref{theorem:convergence} can be used to extend the convergence to the $L^2$ Wasserstein distance $\W_{\norm{\cdot}, 2}$.
Another possibility is to extend traditional convergence of adaptive MCMC to stronger convergence \citep[Theorem 13]{roberts:rosenthal:2007} in the case when (strong) containment and (strong) diminishing adaptation hold.
The following corollary extends convergence in total variation to a stronger Wasserstein distance under similar conditions \citep[Theorem 18]{roberts:rosenthal:2007}. 

%For example, given a lower semi-continuous $V : \X \to [0, \infty)$, $\bar{\rho}(x, y) = \sqrt{ 1 + V(x) + V(y) ]I_{x \not= y} }$, defines a metric and $\W_{\bar{\rho}}$ defines a weighted total variation distance \citep{hairer:2011}. 
%Proposition~\ref{proposition:convergence_wasserstein} may be applied to extend convergence in total variation to even stronger weighted total variation distances. 

\begin{corollary}
\label{corollary:weighted_tv_convergence}
Suppose an adaptive process $(\Gamma_t, X_t)_{t \ge 0}$ with initialization probability measure $\mu$ \hl{as in} \eqref{eq:adaptive_process} satisfies (strong) containment \eqref{assumption:strong_containment} and (strong) diminishing adaptation \eqref{assumption:strong_da}.
Suppose there is a lower semicontinuous function $V : \X \to [0, \infty)$ and constants $\lambda \in (0, 1)$ and $L \in (0, \infty)$ such that for all $\gamma, x \in \Y \times \X$,
$
(\P_\gamma V) (x) \le \lambda V(x) + L.
$
If $\int_{\X} V d\mu < \infty$, then
\begin{align*}
&\lim_{t \to \infty} \W_{\bar{\rho}}\left( \mu \A^{(t)}, \pi \right)
= 0
\end{align*}
where $\bar{\rho}(x, y) = \left[ ( 1 + V(x) + V(y) ) \right]^{1/2}$ if $x \not= y$ and $0$ otherwise.
\end{corollary}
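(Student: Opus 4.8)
The plan is to realize $\W_{\bar\rho}$ as an $L^1$-Wasserstein distance for the unbounded metric $\bar\rho$ and then apply Proposition~\ref{proposition:convergence_wasserstein} with $p = 1$ and a carefully chosen base point, using the drift condition to control moments. First I would observe that $\bar\rho(x,y) \le \left[1 + V(x) + V(y)\right]^{1/2} \cdot I_{\{x \ne y\}}(x,y)$, so $\bar\rho \wedge 1 \le I_{\{x \ne y\}}$, and hence $\W_{\bar\rho \wedge 1}(\cdot,\cdot) \le \W_{\text{TV}}(\cdot,\cdot)$. Since (strong) containment \eqref{assumption:strong_containment} gives convergence in $\W_{\text{TV}}$ of the non-adapted chains and (strong) diminishing adaptation \eqref{assumption:strong_da} holds, Proposition~\ref{proposition:implications} yields weak containment \eqref{assumption:weak_containment} and weak diminishing adaptation \eqref{assumption:weak_da} for the metric $\rho = \bar\rho$ (the weak-containment sequence is dominated by the strong one since $\W_{\bar\rho \wedge 1} \le \W_{\text{TV}}$, and weak diminishing adaptation follows from $\bar\rho \wedge 1 \le I_{\{x\ne y\}}$ exactly as in Proposition~\ref{proposition:implications}). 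Thus Theorem~\ref{theorem:convergence} applies and gives $\W_{\bar\rho \wedge 1}(\mu\A^{(t)}, \pi) \to 0$.

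Next I would upgrade from $\W_{\bar\rho \wedge 1}$ to $\W_{\bar\rho} = \W_{\bar\rho, 1}$ via Proposition~\ref{proposition:convergence_wasserstein} with $p = 1$. The hypotheses of that proposition require a base point $x_0$ with $\int \bar\rho(x, x_0) \pi(dx) < \infty$; I would take any fixed $x_0 \in \X$, so that $\bar\rho(x, x_0)^2 = 1 + V(x) + V(x_0)$ for $x \ne x_0$, and it suffices to show $\int V\, d\pi < \infty$ and, more substantively, the uniform integrability condition (2) of Proposition~\ref{proposition:convergence_wasserstein}, namely that $(\bar\rho(X_t, x_0))_{t \ge 0}$ is uniformly integrable under $\mu\A^{(t)}$. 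Since $\bar\rho(x,x_0) \le (1 + V(x) + V(x_0))^{1/2}$, by Cauchy–Schwarz it is enough to show $\sup_{t \ge 0} \int V\, d\mu\A^{(t)} < \infty$ together with a genuine uniform-integrability tail bound; in fact, because $\bar\rho(x,x_0)^2$ is affine in $V(x)$, uniform integrability of $\bar\rho(X_t, x_0)$ reduces to uniform integrability of $V(X_t)^{1/2}$, and a uniform-in-$t$ bound on $\int V\, d\mu\A^{(t)}$ (a first moment of a quantity whose square root we are integrating) already forces this by de la Vallée–Poussin. So the crux is a uniform drift estimate.

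The main obstacle — and the only real work — is to establish $\sup_{t \ge 0} \int_\X V\, d\mu\A^{(t)} < \infty$ from the simultaneous drift inequality $(\P_\gamma V)(x) \le \lambda V(x) + L$ for all $\gamma$. I would do this by conditioning on the history: writing $\E_\mu$ for expectation under the adaptive process started from $\mu$, the prescription \eqref{eq:adaptive_process} gives $\E_\mu[V(X_{t+1}) \mid \H_t] = (\P_{\Gamma_{t+1}} V)(X_t) \le \lambda V(X_t) + L$ almost surely, since the bound is uniform in $\gamma$; taking expectations and iterating yields $\E_\mu[V(X_t)] \le \lambda^t \int V\, d\mu + L/(1-\lambda)$, which is bounded in $t$ because $\int V\, d\mu < \infty$ by hypothesis. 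This also handles $\int V\, d\pi < \infty$: applying Theorem~\ref{theorem:convergence} (weak convergence) with the lower semicontinuous $V$ and the portmanteau lemma gives $\int V\, d\pi \le \liminf_t \int V\, d\mu\A^{(t)} \le L/(1-\lambda) + \int V\,d\mu < \infty$. With $\sup_t \int V\, d\mu\A^{(t)} < \infty$ in hand, condition (2) of Proposition~\ref{proposition:convergence_wasserstein} holds, so that proposition delivers $\W_{\bar\rho}(\mu\A^{(t)}, \pi) \to 0$, completing the proof. One small point to verify along the way is that $\bar\rho$ is a genuine lower semicontinuous metric (the triangle inequality for this "weighted discrete" metric is standard, and lower semicontinuity follows from lower semicontinuity of $V$), so that $\W_{\bar\rho \wedge 1}$ and $\W_{\bar\rho,1}$ are well-defined and the cited Villani results apply.
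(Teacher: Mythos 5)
Your proposal is correct and follows the same strategy the paper uses: iterate the simultaneous drift inequality to bound $\sup_t \E[V(X_t)]$, deduce uniform integrability of $(\sqrt{V(X_t)})_{t\ge 0}$ from the resulting $L^2$-bound, and then invoke Proposition~\ref{proposition:convergence_wasserstein} with $p=1$ and $\rho=\bar\rho$ (noting that $\bar\rho\wedge 1 = I_{\{x\ne y\}}$ so strong containment and strong diminishing adaptation give the weak conditions for $\bar\rho$). The paper's proof is a one-sentence version of exactly this, and your write-up correctly fills in the details it elides — in particular the portmanteau/lower-semicontinuity step for $\int V\,d\pi<\infty$, which is genuinely needed as a hypothesis of Proposition~\ref{proposition:convergence_wasserstein} and is not spelled out in the paper.
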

\begin{proof}
The drift condition and assumption on $\mu$ imply $ ( \sqrt{ V(X_t) } )_{t \ge 0}$ is uniformly integrable and Proposition~\ref{proposition:convergence_wasserstein} implies the conclusion.
\end{proof}

\begin{remark}
An alternative way to extend weak convergence to a stronger convergence in total variation convergence is through addition of an independent random variable \citep{bogachev:2018}.
Consider $\X = \R^d$ for some $d \in \Z_+$, and an adaptive process $(\Gamma_t, X_t)_{t \ge 0}$ with initialization probability measure $\mu$ \eqref{eq:adaptive_process} satisfies weak containment \eqref{assumption:weak_containment} and weak diminishing adaptation \eqref{assumption:weak_da} both with metric $\rho(\cdot, \cdot) = \norm{\cdot - \cdot}$.
Let $h \in (0, 1)$ and let $\sigma_h$ be a Gaussian distribution on $\R^d$ $N(0, h I)$.
Then
\[
\lim_{t \to \infty} \W_{\text{TV}}\left( \mu \A^{(t)} * \sigma_h, \pi * \sigma_h \right)
= 0
\]
where $*$ denotes the convolution.
\end{remark}

The following is a useful coupling technique to show weak containment \eqref{assumption:weak_containment}.

\begin{lemma}
\label{lemma:contract_containment}
Let $(\Gamma_t, X_t)_{t \ge 0}$ be an adaptive process with initialization probability measure $\mu$ \hl{as in} \eqref{eq:adaptive_process}.
Suppose $\pi \P_\gamma = \pi$ for every $\gamma \in \Y$. Assume for some $x_0 \in \X$ and some $p \in \Z_+$, $L = \int_\X \rho(x, x_0)^p \pi(dx) < \infty$ and for every $\gamma, x \in \Y \times \X$, $\int_\X \rho(y, x_0)^p \P_{\gamma}(x, dy) < \infty$.
Suppose there is an $\alpha \in (0, 1)$ such that for all $x, y \in \X$ and $\gamma \in \Y$, 
\[
\W_{\rho, p}(\P_\gamma(x, \cdot), \P_\gamma(y, \cdot))
\le (1 - \alpha) \rho(x, y).
\]
Then for every $t \in \Z_+$ and $x \in \X$,
\[
\W_{\rho, p}(\P^t_{\gamma}(x, \cdot), \pi) \le (1 - \alpha)^t [ \rho(x, x_0) + L ].
\]
Further, $\sup_{t \ge 0} \E\left[ \rho(X_t, x_0)^p \right] < \infty$ and $( \rho(X_t, x_0) )_{t \ge 0}$ is bounded in probability.
\end{lemma}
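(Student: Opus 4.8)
The plan is to establish the geometric Wasserstein bound for the fixed-$\gamma$ kernels by lifting the pointwise contraction to a contraction on probability measures, and then to read that bound as a drift inequality that controls the moments of the adaptive chain. \emph{Step 1 (lift the contraction to measures).} First I would show that for any Borel probability measures $\mu',\nu'$ on $\X$ with $\W_{\rho,p}(\mu',\nu')<\infty$ and any $\gamma\in\Y$, $\W_{\rho,p}(\mu'\P_\gamma,\nu'\P_\gamma)\le(1-\alpha)\W_{\rho,p}(\mu',\nu')$. One takes the optimal coupling $\xi\in\C(\mu',\nu')$, which exists by \citep[Theorem 4.1]{Villani2008}, together with a Borel measurable family $x,y\mapsto\xi_{x,y}\in\C(\P_\gamma(x,\cdot),\P_\gamma(y,\cdot))$ of optimal couplings (measurable selection, \citep[Corollary 5.22]{Villani2008}, exactly as in Lemma~\ref{lemma:convergence_wc}); then $\int_{\X^2}\xi_{x,y}(\cdot)\,\xi(dx,dy)$ is a coupling of $\mu'\P_\gamma$ and $\nu'\P_\gamma$ whose $p$-cost equals $\int_{\X^2}\W_{\rho,p}(\P_\gamma(x,\cdot),\P_\gamma(y,\cdot))^p\,\xi(dx,dy)\le(1-\alpha)^p\W_{\rho,p}(\mu',\nu')^p$ by hypothesis. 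Iterating this $t$ times with $\mu'=\delta_x$, $\nu'=\pi$ and using $\pi\P_\gamma^t=\pi$ (from $\pi\P_\gamma=\pi$) gives $\W_{\rho,p}(\P_\gamma^t(x,\cdot),\pi)\le(1-\alpha)^t\W_{\rho,p}(\delta_x,\pi)$; since $\W_{\rho,p}(\delta_x,\pi)^p=\int_\X\rho(x,y)^p\pi(dy)\le 2^{p-1}(\rho(x,x_0)^p+L)<\infty$ and, by Minkowski in $L^p(\pi)$, $\W_{\rho,p}(\delta_x,\pi)\le\rho(x,x_0)+L^{1/p}$, this is the stated geometric bound (with $L^{1/p}$ in place of $L$; the two agree when $p=1$).

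\emph{Step 2 (drift inequality and the moment bound).} Applying Step 1 with $t=1$ and the triangle inequality, using $\W_{\rho,p}(\pi,\delta_{x_0})=L^{1/p}$, gives for every $\gamma,x$
\[
\Big(\int_\X\rho(y,x_0)^p\,\P_\gamma(x,dy)\Big)^{1/p}=\W_{\rho,p}(\P_\gamma(x,\cdot),\delta_{x_0})\le(1-\alpha)\rho(x,x_0)+(2-\alpha)L^{1/p},
\]
the left side being finite by the moment hypothesis on the kernels. Since $X_t\mid(\H_{t-1},\Gamma_t)\sim\P_{\Gamma_t}(X_{t-1},\cdot)$, monotonicity of conditional expectation yields $\E[\rho(X_t,x_0)^p\mid\H_{t-1},\Gamma_t]\le\big((1-\alpha)\rho(X_{t-1},x_0)+(2-\alpha)L^{1/p}\big)^p$ almost surely; taking expectations and applying Minkowski's inequality, $a_t:=\E[\rho(X_t,x_0)^p]^{1/p}$ obeys the recursion $a_t\le(1-\alpha)a_{t-1}+(2-\alpha)L^{1/p}$. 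Provided $a_0=\big(\int_\X\rho(x,x_0)^p\,\mu(dx)\big)^{1/p}<\infty$ — automatic for point initializations — unrolling the recursion gives $\sup_{t\ge0}a_t\le a_0+\alpha^{-1}(2-\alpha)L^{1/p}<\infty$, i.e.\ $\sup_{t\ge0}\E[\rho(X_t,x_0)^p]<\infty$.

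\emph{Step 3 and the main obstacle.} Finally, with $M:=\sup_{t\ge0}\E[\rho(X_t,x_0)^p]<\infty$, Markov's inequality gives $\sup_{t\ge0}\Prob(\rho(X_t,x_0)>R)\le M R^{-p}\to 0$ as $R\to\infty$, so $(\rho(X_t,x_0))_{t\ge0}$ is bounded in probability. I expect the genuinely delicate part to be Step 2: transferring the contraction, which only controls each fixed kernel $\P_\gamma$, to the adaptive chain whose driving index $\Gamma_t$ is random and history-dependent requires careful conditioning and the combination of conditional-expectation monotonicity with Minkowski's inequality to close the recursion, and it also forces a $p$-th moment hypothesis on the initialization $\mu$ to handle $t=0$. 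The only subtle ingredient of Step 1, the measurable selection of optimal couplings, is covered by the citations already invoked earlier in this section.
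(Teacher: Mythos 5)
Your proposal is correct and follows essentially the same route as the paper: iterate the one-step contraction to get $\W_{\rho,p}(\P_\gamma^t(x,\cdot),\P_\gamma^t(y,\cdot))\le(1-\alpha)^t\rho(x,y)$, glue the fibrewise optimal couplings with $\pi$ via measurable selection and invariance $\pi\P_\gamma=\pi$ to reach the target, read off a simultaneous drift condition, and conclude boundedness in probability from the drift. The one real divergence is technical: the paper raises the drift to the $p$-th power, invokes Young's inequality to obtain
\[
\int_\X\rho(x',x_0)^p\,\P_\gamma(x,dx')\le(1+\e)(1-\alpha)^p\rho(x,x_0)^p+C_{\e,p}L
\]
for a suitably small $\e$ so that $(1+\e)(1-\alpha)^p<1$, and then cites \citep[Lemma 15]{roberts:rosenthal:2007} to deduce that $(\rho(X_t,x_0))_{t\ge0}$ is bounded in probability. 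You instead stay at the level of the $L^p$-norm $a_t=\E[\rho(X_t,x_0)^p]^{1/p}$, pass through the conditional distribution and Minkowski's inequality, and unroll the scalar recursion $a_t\le(1-\alpha)a_{t-1}+(2-\alpha)L^{1/p}$ explicitly. Your version is a little more self-contained (no appeal to \citep[Lemma 15]{roberts:rosenthal:2007}) and yields the moment bound $\sup_t\E[\rho(X_t,x_0)^p]<\infty$ directly, from which boundedness in probability is immediate by Markov's inequality; the paper's version recovers the drift framework used elsewhere in the literature. Both correctly identify the measurable-selection ingredient \citep[Theorem 4.1, Corollary 5.22]{Villani2008} as the technical justification for mixing the optimal couplings with $\pi$.

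Two of your side remarks are worth recording. First, your observation that the natural bound is $(1-\alpha)^t[\rho(x,x_0)+L^{1/p}]$ rather than $(1-\alpha)^t[\rho(x,x_0)+L]$ is correct: Minkowski in $L^p(\pi)$ gives $\W_{\rho,p}(\delta_x,\pi)\le\rho(x,x_0)+L^{1/p}$, and the paper's own proof only establishes $\W_{\rho,p}(\P_\gamma^t(x,\cdot),\pi)^p\le(1-\alpha)^{tp}\int\rho(x,y)^p\pi(dy)$, which does not give the stated $L$ for $p>1$ when $L<1$; the two agree for $p=1$, so this is a minor imprecision in the statement. Second, the need for $\int\rho(x,x_0)^p\,\mu(dx)<\infty$ to close the recursion at $t=0$ is a genuine requirement that the paper also inherits implicitly through \citep[Lemma 15]{roberts:rosenthal:2007}; flagging it is appropriate.
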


\begin{proof}
For each $t \in Z_+$ and each $x, y \in \X$, we have
\[
\W_{\rho, p}(\P^t_\gamma(x, \cdot), \P^t_\gamma(y, \cdot))^p
\le (1 - \alpha)^{t p} \rho(x, y)^p.
\]
The optimal coupling is attained \citep[Theorem 4.1]{Villani2008} at some conditional coupling $\xi_{x, y}$ and is Borel measurable \citep[Corollary 5.22]{Villani2008}.
Since $\int_\X \xi_{x, y}(\cdot, \cdot) \pi(dy) \in \C[\P_\gamma^t, \pi]$, then
\begin{align*}
\W_{\rho, p}(\P^t_\gamma(x, \cdot), \pi)^p
&\le \int_\X \int_{\X^2} \rho(x', y')^p \xi_{x, y}(dx', dy') \pi(dy)
\\
&\le (1 - \alpha)^{t p} \int_\X \rho(x, y)^p \pi(dy).
\end{align*}
By Young's inequality, for any $\e > 0$, there is a constant $C_\e > 0$ such that for any $a, b \ge 0$, $(a + b)^p \le (1 + \e) a^p + C_\e b^p$.
For any $x_0 \in \X$, we can choose $\e$ sufficiently small so that $(1 + \e)(1 - \alpha)^{p} < 1$ and a constant $C_{\e, p}$ such that 
\begin{align*}
\int_\X \rho(x', x_0)^p \P_\gamma(x, dx')
\le \left[ (1 - \alpha) \rho(x, x_0)
+ 2 L^{1/p} \right]^p
\\
\le (1 + \e) (1 - \alpha)^{p} \rho(x, x_0)^p
+ C_{\e, p} L.
\end{align*}
By \citep[Lemma 15]{roberts:rosenthal:2007}, this simultaneous geometric drift condition implies that $( \rho(X_t, x_0) )_{t \ge 0}$ is bounded in probability.
\end{proof}

\section{A weak law of large numbers}
\label{section:lln}

The point of this section is to develop convergence in probability of the empirical average of the adaptive MCMC process or weak law of large numbers.
The convergence theory developed so far in Wasserstein distances provides estimation accuracy for the marginal distribution of $X_t$ but this generally has a large variability.
Estimation from the entire adaptive process $X_s \sim \mu \A^{(s)}$ for $s \le t$ requires theory for the empirical average.
It is then of interest for reliable estimation to develop conditions so that for bounded $\rho$-Lipschitz functions
\[
\frac{1}{t} \sum_{s = 1}^t \phi(X_s)
\to \int_{\X} \phi d\pi
\]
in probability.

The law of large numbers for non-adapted Markov chains is well-studied under convergence in total variation. 
On the other hand, convergence in Wasserstein distances and its connection to the law of large numbers is less understood \citep[Theorem 1.2]{sandric:2017}.
The first result is general and relies on the convergence of the adaptive process but may even apply the law of large numbers to unbounded functions if the conditions are satisfied.

\begin{theorem}
\label{theorem:wlln}
Let $(\Gamma_t, X_t)_{t \ge 0}$ be an adaptive process with initialization probability measure $\mu$ so that $X_t \sim \mu \A^{(t)}$ \eqref{eq:adaptive_process}.
Let $d(\cdot, \cdot)$ be a lower semicontinuous metric and suppose for some $x_0 \in \X$ and for each $t \in \Z_+$,
$
\int d(x, x_0)^2 \mu \A^{(t)}(dx) < \infty
$
and
$
\int d(x, x_0)^2 \pi(dx) < \infty.
$
If
\[
\lim_{t \to \infty} \W_{d, 2}\left( \mu \A^{(t)}, \pi \right)
= 0,
\]
then for every Borel measurable $\phi : \X \to \R$ with $\norm{\phi}_{\text{Lip}(d)} < \infty$,
\begin{align}
\lim_{t \to \infty} \E\left[ 
\left(
\frac{1}{t} \sum_{s = 1}^t \phi(X_s)
- \int_\X \phi d\pi
\right)^2
\right]
= 0.
\label{eq:wlln_result}
\end{align}
In particular, the weak law of large numbers holds, that is, $\frac{1}{t} \sum_{s = 1}^t \phi(X_s) \to \int_{\X} \phi d\pi$ in probability.
\end{theorem}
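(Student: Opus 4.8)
The plan is to expand the square and split the resulting double sum into a diagonal and an off-diagonal part. Writing $\pi(\phi):=\int_\X\phi\,d\pi$ and $\phi_0:=\phi-\pi(\phi)$,
\[
\E\left[\left(\frac1t\sum_{s=1}^t\phi(X_s)-\pi(\phi)\right)^2\right]
=\frac1{t^2}\sum_{s=1}^t\E\left[\phi_0(X_s)^2\right]
+\frac{2}{t^2}\sum_{1\le s<u\le t}\E\left[\phi_0(X_s)\phi_0(X_u)\right].
\]
A preliminary step is a uniform second-moment bound: since the only coupling of a measure $\nu$ with $\delta_{x_0}$ is $\nu\otimes\delta_{x_0}$, the triangle inequality for $\W_{d,2}$ gives
\[
\left(\int d(x,x_0)^2\,\mu\A^{(s)}(dx)\right)^{1/2}
=\W_{d,2}\left(\mu\A^{(s)},\delta_{x_0}\right)
\le \W_{d,2}\left(\mu\A^{(s)},\pi\right)+\left(\int d(x,x_0)^2\,\pi(dx)\right)^{1/2},
\]
and the right-hand side is bounded uniformly in $s$ because $\W_{d,2}(\mu\A^{(s)},\pi)\to0$. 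Together with $|\phi(x)|\le|\phi(x_0)|+\norm{\phi}_{\text{Lip}(d)}\,d(x,x_0)$, this yields $C:=\sup_{s\ge1}\E[\phi_0(X_s)^2]<\infty$ (so in particular $\phi_0\in L^2(\pi)$), and the diagonal contribution is at most $C/t\to0$. It remains to show the off-diagonal sum vanishes.

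For the off-diagonal terms the natural move is to condition on the history: the sequence $H_t=(\Gamma_0,X_0,\dots,\Gamma_t,X_t)$ is a Markov chain, so for $s<u$,
\[
\E\left[\phi_0(X_s)\phi_0(X_u)\right]
=\E\left[\phi_0(X_s)\left(\E[\phi(X_u)\mid\H_s]-\pi(\phi)\right)\right].
\]
By Cauchy--Schwarz and the bound $\E[\phi_0(X_s)^2]\le C$, it then suffices to produce an estimate $\E[(\E[\phi(X_u)\mid\H_s]-\pi(\phi))^2]\le\e_{u-s}^2$ depending only on the gap $u-s$, with $\e_k\to0$: the triangle sum satisfies $\sum_{1\le s<u\le t}\e_{u-s}=\sum_{k=1}^{t-1}(t-k)\e_k\le t\sum_{k=1}^{t-1}\e_k=o(t^2)$, which finishes the argument. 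Note that the special case in which $\H_s$ is trivial is precisely the bias bound $|\E[\phi(X_u)]-\pi(\phi)|\le\norm{\phi}_{\text{Lip}(d)}\,\W_{d,1}(\mu\A^{(u)},\pi)\le\norm{\phi}_{\text{Lip}(d)}\,\W_{d,2}(\mu\A^{(u)},\pi)\to0$, so the same Lipschitz-plus-Wasserstein mechanism is what one hopes to run conditionally.

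I expect this conditional estimate to be the main obstacle, and the place where the real work lies. The hypothesis controls only the one-dimensional marginals $\mu\A^{(u)}$, whereas $\E[\phi(X_u)\mid\H_s]$ is the conditional mean of $\phi(X_u)$ given the whole trajectory up to time $s$; bounding it requires knowing that the adaptive process, restarted from an essentially arbitrary time-$s$ configuration, still relaxes toward $\pi$ --- a uniform, containment-type statement that convergence of the marginals alone does not supply. The route I would take is to exploit the Markov structure of $(H_t)_t$ to represent $\E[\phi(X_u)\mid\H_s]$ as the integral of $\phi$ against the history-dependent law of the next $u-s$ steps, partition $\{s+1,\dots,u\}$ into blocks whose length grows slowly in $u-s$, approximately freeze the adaptation within each block so that a fixed-kernel convergence estimate applies there, and propagate the resulting control down the blocks via the Wasserstein triangle inequality (much as in the proofs of Theorem~\ref{theorem:convergence} and Lemma~\ref{lemma:convergence_weak_da}); the quantitative ingredients that make the freezing legitimate --- a diminishing-adaptation property paired with uniform convergence of the non-adapted kernels --- are not among the stated hypotheses, so the proof must either invoke them as additional structure available in the intended applications or find a substitute, and that is where I expect the argument to be most delicate.
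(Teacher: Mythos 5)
Your setup matches the paper step for step: the same expansion into diagonal and off-diagonal parts, the same uniform second-moment bound via $\W_{d,2}(\mu\A^{(s)},\delta_{x_0})\le\W_{d,2}(\mu\A^{(s)},\pi)+\W_{d,2}(\pi,\delta_{x_0})$, and the same conditioning $\E[\phi_0(X_s)\phi_0(X_u)]=\E[\phi_0(X_s)\,\E[\phi_0(X_u)\mid\H_s]]$. The routes diverge at the conditional bias. You look for a bound $\E[(\E[\phi_0(X_u)\mid\H_s])^2]\le\e_{u-s}^2$ depending only on the gap $u-s$, which is indeed a mixing/containment-type requirement. The paper instead settles for a bound depending only on the larger index $u$: it disintegrates a coupling $\Gamma$ of $\mu\A^{(u)}$ and $\pi$ into history-conditional pieces $\Gamma_{h_s}$, writes $|\E[\phi_0(X_u)\mid\H_s]|\le\int d(x_u,y)\,\Gamma_{H_s}(dx_u,dy)$, uses Cauchy--Schwarz and Jensen to collapse the conditional average back to $\int d^2\,d\Gamma$, and (taking $\Gamma$ optimal) obtains $|\E[\phi_0(X_s)\phi_0(X_u)]|\le R\,\W_{d,2}(\mu\A^{(u)},\pi)$; since $\W_{d,2}(\mu\A^{(u)},\pi)\to0$, the term $T^{-2}\sum_{u\le T}(u-1)\W_{d,2}(\mu\A^{(u)},\pi)\to0$ by the Ces\`aro argument you already used. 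So the estimate you were chasing is strictly stronger than what the paper's scheme actually requires, and that is worth internalizing.

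That said, your instinct that the stated hypotheses alone do not furnish the needed conditional control is well-founded, and the paper's disintegration step is the place to press. The Lipschitz bound $|\E[\phi_0(X_u)\mid\H_s]|\le\int d(x_u,y)\,\Gamma_{H_s}(dx_u,dy)$ requires $\Gamma_{h_s}$ to be, for a.e.\ $h_s$, a coupling of $\text{Law}(X_u\mid H_s=h_s)$ with $\pi$ itself (so that $\int\phi(y)\,\Gamma_{h_s}(\X,dy)=\int\phi\,d\pi$). Any family of such conditional couplings satisfies $\int d^2\,d\Gamma\ge\E\big[\W_{d,2}(\text{Law}(X_u\mid\H_s),\pi)^2\big]$, and by joint convexity of $\W_{d,2}^2$ the right-hand side is $\ge\W_{d,2}(\mu\A^{(u)},\pi)^2$, with strict inequality in general; so one cannot simultaneously make $\Gamma$ the optimal unconditional coupling and preserve the conditional marginals, and the paper's phrase ``since the coupling $\Gamma$ is arbitrary'' glosses over exactly this. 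A concrete failure: take $\P_\gamma(x,\cdot)=\delta_x(\cdot)$ for all $\gamma$ and initialize $X_0\sim\pi$. Then $X_t=X_0$ for all $t$, $\W_{d,2}(\mu\A^{(t)},\pi)=0$ for every $t$, yet $\frac1t\sum_{s=1}^t\phi(X_s)=\phi(X_0)$ and the left-hand side of \eqref{eq:wlln_result} equals $\Var_\pi(\phi)>0$ for any non-constant Lipschitz $\phi$. What actually must vanish is the averaged conditional quantity $\E[\W_{d,2}(\text{Law}(X_u\mid\H_s),\pi)^2]$ --- precisely the containment-flavored ingredient you identified as missing, and which is what weak containment plus weak diminishing adaptation supply in Corollary~\ref{corollary:wlln}.
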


\begin{proof}
We can assume $\norm{\phi}_{\text{Lip}(d)} \le 1$ since we may normalize $\phi$.
We may also assume $\int \phi d\pi = 0$ since $\psi = \phi - \int \phi d\pi$ is also $d$-Lipschitz.
We can assume there is an $x_0 \in \X$ such that $\phi(x_0) = 0$.
Let $\Gamma$ be a coupling of $X_t \sim \mu \A^{(t)}$ and $Y \sim \pi$.
By disintegration \citep[Theorem 3.4]{kallenberg:2021}, there is a Borel measurable conditional probability measure $\Gamma_{h_s}(dx_t, dy)$ with $h_s = (\gamma_1, x_1, \ldots, \gamma_s, x_s)$ such that
\begin{align*}
\Gamma(dx_t, dy)
= \int_{\X^s} \Gamma_{h_s}(dx_t, dy) \mu\A^{(1, \ldots, s)}(dh_s).
\end{align*}
With the history $H_s = (\Gamma_k, X_k)_{k = 1}^s$ and since $\phi$ is $d$-Lipschitz, for $t \ge s$
\[
\left| \E\left[ \phi(X_{t}) | \H_s \right] - \int_{\X} \phi d\pi \right|
\le \int_{\X^2} d(x_{t}, y) \Gamma_{H_s}(dx_{t}, dy).
\]
For $T \in \Z_+$, we have the upper bound
\begin{align*}
\E\left[ \left( \sum_{t=1}^T \phi(X_t)
\right)^2 \right]
&= \sum_{t=1}^T \sum_{s=1}^T 
\E \left[ \phi(X_t) \phi(X_s) \right]
\\
&= \sum_{t=1}^T \E \left[ \phi(X_t)^2 \right]
+ 2 \sum_{t=2}^T \sum_{s=1}^{t-1} 
\E\left[ \E\left[ \phi(X_t) | \H_s \right] \phi(X_s) \right]
\\
&\le T \sup_{t \ge 0} \int_\X d(x, x_0)^2 \mu\A^{(t)}(dx)
\\
&+ 2 \sum_{t=2}^T \sum_{s=1}^{t-1} 
\E\left[ \int_{\X^2} d(x_{t}, y) \Gamma_{H_s}(dx_{t}, dy) d(X_s, x_0) \right].
\end{align*}
Using Cauchy-Schwarz and Jensen's inequality,
\begin{align*}
&\E\left[ \int_{\X^2} d(x_{t}, y) \Gamma_{H_s}(dx_{t}, dy) d(X_s, x_0) \right]
\\
&\le \sqrt{ \E\left[ \left[ \int_{\X^2} d(x_{t}, y) \Gamma_{H_s}(dx_{t}, dy) \right]^2 \right] } 
\sqrt{ \E\left[ d(X_s, x_0)^2 \right] }
\\
&\le \sqrt{ \int_{\X \times \X} d(x_{t}, y)^2 \Gamma(dx_{t}, dy) }
\sqrt{ \sup_{t \ge 0} \int_\X d(x, x_0)^2 \mu\A^{(t)}(dx) }.
\end{align*}

By assumption, we can choose a $T_\e$ depending on $\e$ such that for all $t \ge T_\e$,
\[
\W_{d, 2}\left( \mu \A^{(t)}, \pi \right)
\le \e.
\]
By assumption, $\max_{0 \le t \le T_\e} \E \left[ d(X_t, x_0)^2 \right] < \infty$ and it follows by the triangle inequality \citep[Lemma 7.6]{Villani2003} that there is an $R \in (0, \infty)$ such that
\begin{align*}
\sup_{t \ge 0} \W_{d, 2}\left( \mu \A^{(t)}, \pi \right)
\le \sqrt{ \sup_{t \ge 0} \E\left( d(X_t, x_0)^2 \right) }
+ \sqrt{ \int d(x, x_0)^2 \pi(dx) }
\le R.
\end{align*} 
Since the coupling $\Gamma$ is arbitrary, we have the upper bound for every $T \ge T_\e + 1$,
\begin{align*}
\E\left[ \left( \frac{1}{T} \sum_{t=1}^T \phi(X_t)
\right)^2 \right]
&\le \frac{R^2}{T} 
+ \frac{2 R}{T^2} \sum_{t=2}^T (t-1) 
\W_{d, 2}\left( \mu \A^{(t)}, \pi \right)
\\
&\le \frac{R^2}{T}
+ \frac{2 R^2 }{T^2} \sum_{t=2}^{T_\e} (t-1)
+ 
\frac{2 R}{T^2} \sum_{t= T_{\e} + 1}^T (t-1) 
\e
\\
&\le \frac{R^2}{T}
+ \frac{ R^2 T_{\e} (T_{\e} - 1) }{T^2}
+ 
\frac{R \e (T - T_\e) (T + T_\e - 1)}{T^2}.
\end{align*}
The conclusion follows since we can choose $T$ sufficiently large and $\e$ sufficiently small.
\end{proof}

Theorem~\ref{theorem:wlln} also provides general conditions for weakly converging Markov chains \citep[Theorem 1.2]{sandric:2017}.
We may now show that the conditions of Theorem~\ref{theorem:convergence} are sufficient for a weak law of large numbers for bounded $\rho-$Lipschitz functions.

\begin{corollary}
\label{corollary:wlln}
Suppose an adaptive process $(\Gamma_t, X_t)_{t \ge 0}$ with initialization probability measure $\mu$ \eqref{eq:adaptive_process} satisfies weak containment \eqref{assumption:weak_containment} and weak diminishing adaptation \eqref{assumption:weak_da}.
Then for every bounded Borel measurable $\phi : \X \to \R$ with $\norm{\phi}_{\text{Lip}(\rho)} < \infty$ and any $p \in \Z_+$
\begin{align}
\lim_{t \to \infty} \E\left[ 
\left|
\frac{1}{t} \sum_{s = 1}^t \phi(X_s)
- \int_\X \phi d\pi
\right|^p
\right]
= 0.
\label{eq:lln_bl}
\end{align}
If in addition, for some $x_0 \in \X$, $( \rho(X_t, x_0)^2 )_{t \ge 0}$ is uniformly integrable and $\int \rho(x, x_0) \pi(dx) < \infty$, then \eqref{eq:lln_bl} holds with $p = 2$ and for all $\norm{\phi}_{\text{Lip}(\rho)} < \infty$.
\end{corollary}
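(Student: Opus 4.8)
The plan is to deduce both statements from Theorem~\ref{theorem:wlln}, whose hypotheses are $L^2$-Wasserstein convergence of $\mu\A^{(t)}$ to $\pi$ together with uniform second-moment bounds; by Theorem~\ref{theorem:convergence} the hypotheses of this corollary already give $\W_{\rho\wedge 1}(\mu\A^{(t)},\pi)\to 0$, so the only genuine work is promoting this to an $L^2$-Wasserstein statement for a suitable metric and checking the integrability conditions.

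For the first claim, fix a bounded Borel $\phi$ with $M=\sup_x|\phi(x)|<\infty$ and $L=\norm{\phi}_{\text{Lip}(\rho)}<\infty$. First I would observe that $\phi$ is bounded and Lipschitz for the truncated metric $\rho\wedge 1$, since $|\phi(x)-\phi(y)|\le\min\{2M,L\,\rho(x,y)\}\le\max\{2M,L\}\,(\rho(x,y)\wedge 1)$. I then apply Theorem~\ref{theorem:wlln} with $d=\rho\wedge 1$: the requirements $\int d(x,x_0)^2\,\mu\A^{(t)}(dx)<\infty$ and $\int d(x,x_0)^2\,\pi(dx)<\infty$ are immediate since $d\le 1$, and the convergence requirement follows from Theorem~\ref{theorem:convergence} and the elementary inequality $d^2\le d$, which give $\W_{d,2}(\mu\A^{(t)},\pi)^2\le\W_{d,1}(\mu\A^{(t)},\pi)=\W_{\rho\wedge 1}(\mu\A^{(t)},\pi)\to 0$. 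Hence Theorem~\ref{theorem:wlln} yields \eqref{eq:lln_bl} with $p=2$. For general $p\in\Z_+$, set $Z_t=\frac{1}{t}\sum_{s=1}^t\phi(X_s)-\int_\X\phi\,d\pi$; then $|Z_t|\le 2M$ almost surely, so $\E[|Z_t|^p]\le(2M)^{p-2}\E[|Z_t|^2]\to 0$ for $p\ge 2$ and $\E[|Z_t|^p]\le\E[|Z_t|^2]^{p/2}\to 0$ for $p\le 2$ by Jensen's inequality.

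For the second claim, the aim is to run the same argument with $d=\rho$, which additionally needs $\int_\X\rho(x,x_0)^2\,\pi(dx)<\infty$ and $\W_{\rho,2}(\mu\A^{(t)},\pi)\to 0$. Uniform integrability of $(\rho(X_t,x_0)^2)_{t\ge 0}$ gives $C:=\sup_{t\ge 0}\int_\X\rho(x,x_0)^2\,\mu\A^{(t)}(dx)<\infty$; for each $M>0$ the truncation $x\mapsto\rho(x,x_0)^2\wedge M$ is bounded, Borel measurable, and $\rho$-Lipschitz, so by Theorem~\ref{theorem:convergence} and its bounded-Lipschitz consequence $\int_\X(\rho(x,x_0)^2\wedge M)\,\pi(dx)=\lim_{t\to\infty}\int_\X(\rho(x,x_0)^2\wedge M)\,\mu\A^{(t)}(dx)\le C$, and letting $M\to\infty$ by monotone convergence gives $\int_\X\rho(x,x_0)^2\,\pi(dx)\le C<\infty$. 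With this integrability and the assumed uniform integrability, Proposition~\ref{proposition:convergence_wasserstein} with $p=2$ (the implication from uniform integrability to $L^p$-Wasserstein convergence) gives $\W_{\rho,2}(\mu\A^{(t)},\pi)\to 0$. Then Theorem~\ref{theorem:wlln} with $d=\rho$ applies: its moment hypotheses hold since $\sup_{t\ge 0}\int_\X\rho(x,x_0)^2\,\mu\A^{(t)}(dx)<\infty$ and $\int_\X\rho(x,x_0)^2\,\pi(dx)<\infty$, its convergence hypothesis is what we just established, and every $\phi$ with $\norm{\phi}_{\text{Lip}(\rho)}<\infty$ is square-integrable against each $\mu\A^{(t)}$ and against $\pi$ because $|\phi(x)|\le|\phi(x_0)|+\norm{\phi}_{\text{Lip}(\rho)}\,\rho(x,x_0)$; this yields \eqref{eq:lln_bl} with $p=2$.

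The one step that is not routine hypothesis-checking and Jensen-type bookkeeping is the transfer of the uniform second-moment bound along the process to the limit $\pi$, i.e.\ showing $\int_\X\rho(x,x_0)^2\,\pi(dx)<\infty$; I would handle it with the truncation-plus-monotone-convergence device above, using the bounded Lipschitz functions $\rho(\cdot,x_0)^2\wedge M$ together with Theorem~\ref{theorem:convergence}, after which Proposition~\ref{proposition:convergence_wasserstein} and Theorem~\ref{theorem:wlln} complete the argument.
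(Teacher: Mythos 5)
Your proposal is correct and follows the same overall route as the paper: deduce $\W_{\rho\wedge 1}(\mu\A^{(t)},\pi)\to 0$ from Theorem~\ref{theorem:convergence}, feed it into Theorem~\ref{theorem:wlln} with $d=\rho\wedge 1$ for the bounded case (boundedness of $|Z_t|$ upgrades $L^2$ to $L^p$), and invoke Proposition~\ref{proposition:convergence_wasserstein} plus Theorem~\ref{theorem:wlln} with $d=\rho$ for the unbounded case. Your intermediate observations — that a bounded $\rho$-Lipschitz $\phi$ is $(\rho\wedge 1)$-Lipschitz with constant $\max\{2M,L\}$, and that $\W_{d,2}^2\le\W_{d,1}$ for $d\le 1$ — are exactly the right glue and are correct.

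What you add beyond the paper's one-paragraph argument is worth highlighting: the corollary only assumes $\int\rho(x,x_0)\,\pi(dx)<\infty$, whereas Proposition~\ref{proposition:convergence_wasserstein} with $p=2$ and Theorem~\ref{theorem:wlln} with $d=\rho$ both require $\int\rho(x,x_0)^2\,\pi(dx)<\infty$, and the paper's proof passes over this. Your truncation-plus-monotone-convergence step genuinely closes this gap: the functions $\rho(\cdot,x_0)^2\wedge M=(\rho(\cdot,x_0)\wedge\sqrt{M})^2$ are bounded and $\rho$-Lipschitz (constant $2\sqrt{M}$), so Theorem~\ref{theorem:convergence}'s bounded-Lipschitz consequence transfers the uniform second-moment bound $C=\sup_t\int\rho(x,x_0)^2\,\mu\A^{(t)}(dx)<\infty$ (available from uniform integrability) to $\int(\rho(x,x_0)^2\wedge M)\,\pi(dx)\le C$, and then $M\to\infty$ gives $\int\rho(x,x_0)^2\,\pi(dx)\le C$. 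This makes your argument a more careful and complete version of the paper's; it also shows that the corollary's hypothesis with the first moment of $\pi$ finite does indeed suffice (the second moment of $\pi$ being finite is a derived fact, not a missing assumption).
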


\begin{proof}
For Borel measurable $\phi : \X \to \R$ with $\norm{\phi}_{\text{Lip}(\rho)} < \infty$, apply Theorem~\ref{theorem:convergence} and Theorem~\ref{theorem:wlln} and it follows \eqref{eq:wlln_result} holds. Since $\phi$ is bounded and \eqref{eq:wlln_result} holds, $L^p$ convergence follows for $p \in \Z_+$.
To remove the bounded assumption on $\phi$, since it is assumed $( \rho(X_t, x_0)^2 )_{t \ge 0}$ is uniformly integrable, $\E(\rho(X_t, x_0)^2) < \infty$ for each $t \in \Z_+$ and $\int \rho(x, x_0) \pi(dx) < \infty$ then we can apply Proposition~\ref{proposition:convergence_wasserstein}.
\end{proof}

%One approach to verifying that $( \rho(X_t, x_0)^2 )_{t \ge 0}$ is uniformly integrable is through simultaneous drift conditions as defined in \citep{roberts:rosenthal:2007}.
%In particular, \citep[Theorem 18]{roberts:rosenthal:2007} then guarantees a weak law of large numbers for functions that are dominated by the squre root of the drift function and possibly unbounded.

%Stronger conditions allow for unbounded Lipschitz functions which can be controlled with simultaneous drift conditions.
%The additional integrability conditions in Theorem~\ref{theorem:wlln} can be controlled using drift conditions. 
%In particular, Corollary~\eqref{corollary:wlln} can be used in conjunction with Corollary~\ref{corollary:weighted_tv_convergence} existing results when Wasserstein distance is related to a weighted total variation \citep[Theorem 23]{roberts:rosenthal:2007}.

\section{Examples and applications}
\label{section:examples_applications}

Let us now revisit constructing adaptive processes for the running examples of Markov chains \ref{example:discrete_ar_process} and \ref{example:infinite_ar_process} where (strong) containment \eqref{assumption:strong_containment} fails to hold.

\subsection{Example: discrete adaptive auto-regressive process}
\label{example:discrete_ar_process_convergence}

Consider an adaptive process $(\Gamma_t, X_t)_{t \ge 0}$ using the Markov kernels $(\P_\gamma)_\gamma$ for the discrete auto-regressive process defined in \hl{Example~\ref{example:discrete_ar_process}} which fails to satisfy (strong) containment \eqref{assumption:strong_containment}.
Assume the adaptation satisfies $|\Gamma_{t + 1} - \Gamma_{t}| \to 0$ in probability as $t \to \infty$.
For any $\gamma \in \Z_+$ with $\gamma \ge 2$ and $x \in [0, 1)$, we showed previously \hl{in Example~\ref{example:discrete_ar_process}} that for any $t \in \Z_+$
\[
\W_{ |\cdot|}\left( \P^t_\gamma(x, \cdot), \text{Unif}(0, 1) \right) \le 2^{-t}
\]
where $\text{Unif}([0, 1)$ is Lebesgue measure on $[0, 1)$ and so weak containment holds \eqref{assumption:weak_containment}.
For every $x, y \in [0, 1)$ define $X^{\Gamma_{t}}_1 = x/\Gamma_t + \xi^{\Gamma_t}_1$ and $Y^{\Gamma_{t}}_1 = y/\Gamma_t + \xi^{\Gamma_t}_1$ with common discrete random variable $\xi^{\Gamma_t}_1$ defined previously in \hl{Example~\ref{example:discrete_ar_process}}.
The random variables $(X^{\Gamma_{t + 1}}_1, Y^{\Gamma_{t}}_1)$ define a coupling and since for sufficiently large $t$, $\Gamma_{t + 1} = \Gamma_t$ with high probability, then 
\begin{align}
\sup_{|x - y| \le \delta} \W_{ |\cdot|}\left( \P_{\Gamma_{t + 1}}(x, \cdot), \P_{\Gamma_{t}}(y, \cdot) \right)
\le \sup_{|x - y| \le \delta} \E\left[ \left| X^{\Gamma_{t + 1}}_1 - Y^{\Gamma_{t}}_1 \right| | X_0 = x, Y_0 = y \right]
\le \frac{\delta}{2}
\label{eq:discrete_ar_ub}
\end{align}
holds with high probability.
Then \eqref{eq:discrete_ar_ub} tends to $0$ as $\delta \to 0$ and we conclude weak diminishing adaptation \eqref{assumption:weak_da} holds. 
By Theorem~\ref{theorem:convergence}, for every $\gamma \ge 2$ and $x \in [0, 1)$,
\[
\lim_{t\to\infty} \W_{|\cdot|}[ \A^{(t)}( (\gamma, x) , \cdot), \text{Unif}(0, 1) ]
= 0
\]
and this discrete auto-regressive adaptive process converges weakly.
Corollary~\ref{corollary:wlln} then implies a weak law of large numbers for all bounded Lipschitz continuous functions.

\subsection{Example: infinite-dimensional adaptive auto-regressive process}
\label{example:infinite_ar_process_convergence}

Consider an adaptive process $(\Gamma_t, X_t)_{t \ge 0}$ using Markov kernels $(\P_\gamma)_\gamma$ for the infinite-dimensional auto-regressive process \eqref{example:infinite_ar_process} which cannot satisfy (strong) containment \eqref{assumption:strong_containment}.
Assume the adaptation is restricted to a bounded set, that is, for some $R \in (0, \infty)$, if $\norm{X_t} > R$ then $\Gamma_{t + 1}= \Gamma_{t}$ and $|\Gamma_t - \Gamma_{t + 1}| \to 0$ in probability as $t \to \infty$.

We showed previously \eqref{example:infinite_ar_process} that for any $\gamma \in (0, \gamma^*)$ and any $x, y \in H$,
\begin{align}
\W_{\norm{\cdot}, 2}( \P_{\gamma}(x, \cdot), \P_{\gamma}(y, \cdot) ) 
\le \gamma^* \norm{x - y}
\label{eq:infinit_ar_ub}
\end{align}
and combined with Lemma~\ref{lemma:contract_containment} implies weak containment \eqref{assumption:weak_containment} holds.
For $x, y \in H$, let $Y_t = \Gamma_{t + 1} x + \sqrt{1 - {\Gamma_{t + 1}}^2} \xi_{t}$ and $Y'_t = \Gamma_{t}  y + \sqrt{1 - {\Gamma_{t}}^2} \xi_{t}$ with common independent random variable $\xi_t \sim \N(0, C)$.
We have the upper bound for any $t \in \Z_+$
\begin{align*}
\W_{\norm{\cdot}, 2}( \P_{\Gamma_{t + 1}}(x, \cdot), \P_{\Gamma_{t}}(y, \cdot) )
&\le \left[ \E\left( \norm{Y_t - \Gamma_{t}x + \Gamma_{t}x - Y'_t}^2 \right) \right]^{1/2}
\\
&\le | \Gamma_{t + 1} - \Gamma_{t} | \norm{ x }  
+ {\gamma^*} \norm{ x - y }
\\
&\hspace{1.2em}+ \left| \sqrt{1 - \Gamma_{t + 1}^2} - \sqrt{1 - \Gamma_{t}^2} \right| \sqrt{ \tr(C) }.
\end{align*}
If $\norm{x} > R$ and $\norm{x - y} \le \delta$, then
$
\W_{\norm{\cdot}, 2}( \P_{\Gamma_{t + 1}}(x, \cdot), \P_{\Gamma_{t}}(y, \cdot) )
\le \gamma^* \delta.
$
Otherwise if $\norm{x} \le R$ and $\norm{x - y} \le \delta$,
\[
\W_{\norm{\cdot}, 2}( \P_{\Gamma_{t + 1}}(x, \cdot), \P_{\Gamma_{t}}(y, \cdot) )
\le | \Gamma_{t + 1} - \Gamma_{t} | R 
+ \gamma^* \delta
+ \left| \sqrt{1 - \Gamma_{t + 1}^2} - \sqrt{1 - \Gamma_{t}^2} \right| \sqrt{\tr(C)}.
\]
In either case, weak diminishing adaptation \eqref{assumption:weak_da} holds.

For any $p \in \Z_+$, Young's inequality and Fernique's theorem \citep[Theorem 2.8.5]{bogachev:1998} imply we can choose $\e >0 $ such that $(1 + \e) {\gamma^*}^p < 1$ and a constant $C_{\e} > 0$ depending on $\e$ such that for every $\gamma, x \in \Y \times \X$
\begin{align*}
(\P_\gamma) ( \norm{ x }^p )
\le (1 + \e) {\gamma^*}^p \norm{x}^p + C_{\e}.
\end{align*}
This implies $( \norm{X_t}^p ) _{t \ge 0}$ is uniformly integrable.
From Theorem~\ref{theorem:convergence} and Proposition~\ref{proposition:convergence_wasserstein}, then for every $\gamma, x \in (0, \gamma^*] \times H$ and every $p \in \Z_+$,
\[
\lim_{t\to\infty} \W_{\norm{\cdot}, p}[ \A^{(t)}( (\gamma, x), \cdot), \N(0, \C) ]
= 0
\]
and Corollary~\ref{corollary:wlln} implies a weak law of large numbers for all Lipschitz continuous functions.

\subsection{Example: adaptive random-walk Metropolis-Hastings}
\label{section:amh}

We look at a discrete version of the adaptive random-walk Metropolis-Hastings algorithm \citep{haario:2001} which adapts the covariance of the proposal towards the covariance of the target probability measure.
This concrete example illustrates an issue in practical applications since current computers only produce floating-point approximations to real numbers. As a result, convergence theory in total variation corresponding to an adaptive Markov chain Monte Carlo simulation targeting a continuous target probability measure is infeasible and has other issues \hl{that} have been studied previously through perturbation theory \citep{roberts:rosenthal:schwartz:1998, breyer:etal:2001}.
This is not necessarily the case in alternative distances which metrize weak convergence.

Let $\pi$ be a target Borel probability measure on $\R^d$ with $d \in \Z_+$ and Lebesgue density $f$.
Let $D = (x_k)_{k = 1}^{\infty} \subset \R^d$ be a dense subset in $\R^d$, $\Sigma$ be a symmetric, positive-definite matrix, and $\mu \in \R^d$, and let $\N_{D}(\mu, \Sigma)$ denote the discrete Gaussian with probability mass function
\[
g_{\Sigma}(\mu, x) = \frac{\exp\left( -\frac{1}{2} ( x - \mu )^T \Sigma^{-1} ( x - \mu ) \right)}{\sum_{j = 1}^{\infty} \exp\left( - \frac{1}{2} ( x_j - \mu )^T \Sigma^{-1} ( x_j - \mu ) \right) }.
\]
When $\mu \in D$, the $g_{\Sigma}(\mu, x) = g_{\Sigma}(x, \mu)$ and it is symmetric.
Let $(M(\gamma))_{\gamma \in \Y}$ be a family of symmetric, positive-definite matrices on $\R^d$.
We define a discrete Markov chain $( X^{\gamma}_{t} )_{t \ge 0}$ using a discrete random-walk Metropolis-Hastings kernel $\P_{\Sigma}$ with discrete Gaussian proposal where the proposal $X'$ given the previous state $x$ is $X' \sim \N_{D}(x, M(\gamma))$.
The Markov kernel is defined for each $x_l, x_k \in D$ by
\[
\P_{\gamma}(x_l, x_k)
= \left[ 1 \wedge \frac{f(x_k)}{f(x_l)} \right] g_{M(\gamma)}(x_l, x_k)
+ \delta_{x_l}(\{ x_k \})\left( 1 - \sum_{j = 1}^{\infty} \left[ 1 \wedge \frac{f(x_j)}{f(x_l)} \right] g_{M(\gamma)}(x_l, x_j) \right).
\]

We will assume $f$ is continuous with compact support $K \subset \R^d$.
Further, we will assume the set of $\Sigma$  is compact and so the eigenvalues of $\Sigma$ are uniformly bounded so that there are constants $\lambda_*, \lambda^* \in (0, \infty)$ such that $\lambda_* \le \lambda_{i}(\Sigma) \le \lambda^*$ for all $i = 1, \ldots, d$.
It follows by a a minorization argument over $K$ there is an $\alpha \in (0, 1)$ such that for any $\gamma \in \Y$, any $x_i, x_j \in D$, and any bounded Lipschitz continuous function $\phi : \R^d \to \R$,
\[
\left| \P_\gamma^t \phi(x_i) - \P_\gamma^t \phi(x_j) \right|
\le (1 - \alpha)^t.
\]
By the density of $D$ and continuity of $\P \phi(\cdot)$, we have that 
\[
\left| \P_\gamma^t \phi (x_i) - \int_K \phi d\pi \right|
\le (1 - \alpha)^t.
\]
Weak containment \eqref{assumption:weak_containment} holds since it follows by Kantorovich-Rubinstein duality \citep[Theorem 1.14]{Villani2003}
\[
\W_{\norm{\cdot} \wedge 1}\left( \P_\gamma^t(x_i, \cdot), \pi \right)
\le (1 - \alpha)^t.
\]

Now let $\gamma, x \in \Y \times D$ and let $(\Gamma_t, X_t)_{t \ge 0}$ where $X_t \sim \A^{(t)}( (\gamma, x), \cdot)$ be the adaptive process using these Metropolis-Hastings kernels.
Under any valid weak diminishing adaptation strategy satisfying \eqref{assumption:weak_da}, we have
\[
\lim_{t \to \infty} \W_{\norm{\cdot} \wedge 1}\left( \A^{(t)}( (\gamma, x), \cdot), \pi \right)
= 0.
\]
Corollary~\ref{corollary:wlln} then implies a weak law of large numbers for bounded Lipschitz continuous functions.
On the other hand, it can be shown that
\[
\W_{\text{TV}}(\A^{(t)}\left( (\gamma, x), \cdot), \pi \right) = 1
\]
and fails to converge in total variation under any adaptation plan.

\subsection{Example: adaptive unadjusted Langevin process}
\label{example:adaptive_ula}

In certain cases, it has been observed \citep[page 21]{Villani2008} that Wasserstein distances can be simpler to prove convergence results than total variation and the following example provides a concrete illustration.
Consider the Euclidean space $\R^d$ where $d \in \Z_+$ with Euclidean norm $\norm{\cdot}$. 
Let the potential $V : \R^d \to \R$ have gradient $\nabla V(\cdot)$ with constants $\alpha, \beta > 0$ such that for every $x, y \in \R^d$,
\begin{align}
&\norm{ \nabla V(y) - \nabla V(x) }
\le \beta \norm{y - x}
\label{eq:lip_grad}
\\
&V(y)
\ge V(x) + \ip{\nabla V(x), y - x} + \frac{\alpha}{2} \norm{y - x}^2.
\label{eq:strong_convex}
\end{align}
Let $(\Y, d)$ be a complete separable metric space and for each $\gamma \in \Y$, let $M(\gamma)$ define a symmetric positive definite matrix.
Let $h \in (0, 1)$ be a fixed discretization size and consider the unadjusted Langevin process
\[
X^{\gamma, h}_{t + 1}
= X^{\gamma, h}_t - h M(\gamma) \nabla V(M(\gamma) X^{\gamma, h}_t) + \sqrt{2 h} Z_{t + 1}
\]
where $(Z_t)_{t \ge 0}$ are i.i.d $\N(0, I_d)$.
We can define a family of Markov kernels $( \P_{\gamma, h} )_{\gamma, h}$ prescribing the conditional distributions $X^{\gamma, h}_{t + 1} | X^{\gamma, h}_{t} = x \sim \P_{\gamma, h}(x, \cdot)$.
For an adaptive strategy $(\Gamma_t, h_t)_{t \ge 0}$, we define the adaptive process $X_t := M(\Gamma_t) X^{\Gamma_t, h_t}_t$ for $t \ge 0$.
For example, $M(\Gamma_t)$ can estimate the inverse covariance matrix using the entire history of the process as in adaptive Metropolis-Hastings \citep{haario:2001} and adaptive Piecewise-deterministic Markov processes \citep{bertazzi:2022}.
We make the following assumption on the \hl{adaptation of} the matrix $M(\gamma)$.
\begin{assumption}
\label{assumption:adaptation_plan_ula}
Assume $M(\cdot)$ is continuous and for each $\gamma \in \Y$ and $x \in \R^d$, there is a constant $\lambda_* \in (0, \infty)$ such that for every $v \in \R^d\setminus \{0\}$ with $\norm{v} = 1$,
$
\lambda_* \le \ip{ v M(\gamma), v } \le 1.
$
Assume $d(\Gamma_{t + 1}, \Gamma_t) \to 0$ in probability and $\Gamma_{t + 1} = \Gamma_t$ if $\norm{X_{t}} > R$ for some $R > 0$.
\end{assumption}

\noindent We have the following convergence result.

\begin{proposition}
Assume the adaptation plan satisfies assumption~\ref{assumption:adaptation_plan_ula} and for some $h_* \in (0, 1)$, let $H = [h_*, 1/(\alpha + \beta) ]$ and assume $(h_t)_{t \ge 0}$ is a deterministic sequence with $h_t \in H$. Further, assume there is a limit $h^* \in H$ such that $\lim_{t \to \infty}|h_t - h^*| = 0$. Then the adaptive unadjusted Langevin process converges in the $L^1$-Wasserstein distance for every $\gamma, h, x \in \Y \times H \times \R^d$ to some probability measure $\pi_{h^*}$, that is,
\[
\lim_{t\to\infty} \W_{\norm{\cdot}}[ \A^{(t)}( (\gamma, h, x), \cdot), \pi_{h^*} ]
= 0
\]
and a weak law of large numbers holds for all bounded Lipschitz continuous functions.
\end{proposition}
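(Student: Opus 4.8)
The plan is to verify the two hypotheses of Theorem~\ref{theorem:convergence} with $\rho(\cdot,\cdot)=\tilde\rho(\cdot,\cdot)=\norm{\cdot-\cdot}$ for the family of Markov kernels $(\P_{\gamma,h})_{(\gamma,h)\in\Y\times H}$ underlying the adaptive process, where $\P_{\gamma,h}(x,\cdot)$ is the law of $x-hM(\gamma)^2\nabla V(x)+\sqrt{2h}\,M(\gamma)Z$ with $Z\sim\N(0,I_d)$; this is the one-step transition of $X_t=M(\Gamma_t)X^{\Gamma_t,h_t}_t$ once the auxiliary variable is eliminated. Given weak containment and weak diminishing adaptation, Theorem~\ref{theorem:convergence} delivers convergence in $\W_{\norm{\cdot}\wedge1}$, Proposition~\ref{proposition:convergence_wasserstein} upgrades it to the $L^1$-Wasserstein distance once $(\norm{X_t})_{t\ge0}$ is uniformly integrable, and Corollary~\ref{corollary:wlln} yields the law of large numbers for bounded Lipschitz functions.

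The analytic core is a uniform contraction. Couple two copies of $\P_{\gamma,h}$ from $x,y$ through the common $Z$; the difference of one step is $(I-hM(\gamma)^2\bar H)(x-y)$, where $\bar H=\int_0^1\nabla^2V\big(y+s(x-y)\big)\,ds$ satisfies $\alpha I\preceq\bar H\preceq\beta I$ by \eqref{eq:lip_grad} and \eqref{eq:strong_convex}. Measured in the weighted metric $\rho_\gamma(x,y)^2=(x-y)^\top M(\gamma)^{-2}(x-y)$ this coupling is the action of the symmetric matrix $I-hM(\gamma)\bar HM(\gamma)$; since Assumption~\ref{assumption:adaptation_plan_ula} confines the spectrum of $M(\gamma)$ to $[\lambda_*,1]$, that of $M(\gamma)\bar HM(\gamma)$ lies in $[\alpha\lambda_*^2,\beta]$, and $h\le1/(\alpha+\beta)\le1/\beta$ gives operator norm at most $1-h\alpha\lambda_*^2\le1-h_*\alpha\lambda_*^2=:1-c<1$, uniformly over $(\gamma,h)$. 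Because $\norm{\cdot}\le\rho_\gamma\le\lambda_*^{-1}\norm{\cdot}$, iterating gives $\W_{\norm{\cdot},2}(\P_{\gamma,h}^n(x,\cdot),\pi_{\gamma,h})\le\lambda_*^{-1}(1-c)^n(\norm{x-x_0}+C)$, where $\pi_{\gamma,h}$ is the unique $\P_{\gamma,h}$-stationary measure (Banach fixed point) and $C$ is uniform thanks to the one-step drift $(\P_{\gamma,h}\norm{\cdot-x_0}^2)(x)\le(1-c')\norm{x-x_0}^2+L$, which follows from the same computation together with the uniform bound on the second moment of $\sqrt{2h}\,M(\gamma)Z$. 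The drift argument of Lemma~\ref{lemma:contract_containment} (as in the auto-regressive examples) then makes $(\norm{X_t})_{t\ge0}$ bounded in probability and, using the uniformly bounded moments of the Gaussian increments and Young's inequality, $(\norm{X_t}^p)_{t\ge0}$ uniformly integrable for every $p\in\Z_+$. Combining the geometric bound with the boundedness in probability of $(\norm{X_t})$ and with $h_t\to h^*$ (so that, by continuity of $(\gamma,h)\mapsto\P_{\gamma,h}$ in $\W_{\norm{\cdot},2}$, the stationary measures $\pi_{\Gamma_t,h_t}$ converge to a limit we denote $\pi_{h^*}$) gives weak containment \eqref{assumption:weak_containment}.

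Weak diminishing adaptation \eqref{assumption:weak_da} follows by coupling $\P_{\Gamma_{t+1},h_{t+1}}$ and $\P_{\Gamma_t,h_t}$ synchronously from $x,y$ with $\norm{x-y}\le\delta$: the difference of one step splits into the contractive piece (estimated as above, giving a term of order $\delta$) plus $[h_tM(\Gamma_t)^2-h_{t+1}M(\Gamma_{t+1})^2]\nabla V(x)+[\sqrt{2h_{t+1}}\,M(\Gamma_{t+1})-\sqrt{2h_t}\,M(\Gamma_t)]Z$, whose $L^2$-norm is bounded by a constant times $(\norm{x}+\sqrt d)\big(\norm{h_tM(\Gamma_t)^2-h_{t+1}M(\Gamma_{t+1})^2}+|\sqrt{2h_{t+1}}-\sqrt{2h_t}|+\norm{M(\Gamma_{t+1})-M(\Gamma_t)}\big)$. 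On $\{\norm{X_t}>R\}$ the adaptation is frozen and only the deterministic gap $|h_{t+1}-h_t|\to0$ survives; on $\{\norm{X_t}\le R\}$ the extra terms are $\le\mathrm{const}\cdot(R+\sqrt d)$ times a quantity that tends to $0$ in probability, since $d(\Gamma_{t+1},\Gamma_t)\to0$ in probability, $M(\cdot)$ is continuous, and $|h_{t+1}-h_t|\to0$. Hence $D_{\norm{\cdot}}(\Gamma_{t+1},\Gamma_t)\to0$ in probability (measurability being routine), which is \eqref{assumption:weak_da}.

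With both hypotheses in hand, Theorem~\ref{theorem:convergence} gives $\W_{\norm{\cdot}\wedge1}(\A^{(t)}((\gamma,h,x),\cdot),\pi_{h^*})\to0$ for every $\gamma,h,x$; Proposition~\ref{proposition:convergence_wasserstein}, supplied with the uniform integrability of $(\norm{X_t}^p)_{t\ge0}$ and $\int\norm{x-x_0}^p\pi_{h^*}(dx)<\infty$ (inherited from the uniform drift and lower semicontinuity of moments), upgrades this to $\W_{\norm{\cdot},p}(\A^{(t)},\pi_{h^*})\to0$ for all $p\in\Z_+$, in particular $p=1$; and Corollary~\ref{corollary:wlln} then gives the weak law of large numbers for bounded Lipschitz $\phi$ (indeed for all Lipschitz $\phi$, by the uniform integrability). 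I expect the main obstacle to be the contraction step: because $M(\gamma)^2$ does not commute with $\nabla^2V$, a synchronous coupling need not contract in $\norm{\cdot}$, so the estimate must be run in the $M(\gamma)^{-2}$-weighted metric and transferred back at the cost of the factor $\lambda_*^{-1}$; a secondary delicate point is showing that the stationary measures $\pi_{\Gamma_t,h_t}$ actually converge to a single limiting measure $\pi_{h^*}$, so that weak containment is meaningful against one target.
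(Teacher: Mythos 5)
Your proof follows the same overall architecture as the paper's: establish a uniform one-step contraction for the frozen kernels via synchronous coupling, obtain boundedness of $(\norm{X_t})_t$ from a simultaneous drift, verify weak diminishing adaptation with a Lipschitz estimate in $(\gamma,h)$, and then invoke Theorem~\ref{theorem:convergence}, Lemma~\ref{lemma:contract_containment}, Proposition~\ref{proposition:convergence_wasserstein}, and Corollary~\ref{corollary:wlln}. The one place where you take a genuinely different technical route is the contraction step: you diagonalize the one-step map through the mean-value Hessian $\bar H$ and run the estimate in the $M(\gamma)^{-2}$-weighted metric, where the relevant matrix $I - h M(\gamma)\bar H M(\gamma)$ is symmetric with spectrum in $[\,1-h\beta,\,1-h\alpha\lambda_*^2\,]$. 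The paper instead passes to the auxiliary variable $X^{\gamma,h}_t$ (which is exactly the $M(\gamma)^{-1}$-change of coordinates you describe) and invokes Nesterov's co-coercivity (Theorem 2.1.12) for the transformed potential $V(M(\gamma)\cdot)$. The two are the same change of variables; Nesterov's inequality is slightly more general in that it does not require two derivatives, while your spectral computation is more self-contained and makes explicit \emph{why} the raw synchronous coupling need not contract in Euclidean norm when $M(\gamma)^2$ and $\nabla^2 V$ fail to commute --- a point the paper sidesteps by never leaving the auxiliary coordinates. You should be aware that your Hessian decomposition implicitly assumes a second derivative (Rademacher/Alexandrov suffices here, but it is worth stating), whereas the paper's \eqref{eq:lip_grad}--\eqref{eq:strong_convex} only require $C^1$ regularity. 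Both proofs are terse on two genuine subtleties: (i) transferring the $\rho_\gamma$-drift to a $\gamma$-uniform drift on $\norm{\cdot-x_0}$ requires using the freezing of adaptation on $\{\norm{X_t}>R\}$, since otherwise the $\lambda_*^{-2}$ metric-equivalence factor can destroy the contraction across adaptation steps; and (ii) the $\P_{\gamma,h}$-invariant measures do depend on $\gamma$ as well as $h$ (discretization bias), so identifying a single limiting $\pi_{h^*}$ needs more than $d(\Gamma_{t+1},\Gamma_t)\to 0$ and $h_t\to h^*$. You correctly flag (ii) as a delicate point; the paper does not, and its appeal to Lemma~\ref{lemma:contract_containment}, which requires a common invariant $\pi$ across all $\gamma$, is not justified as written.
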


\begin{proof}
It can be shown that $M(\gamma) \nabla V(M(\gamma) \cdot )$ is $\beta$ Lipschitz and $\alpha$ strongly convex.
For $x, y \in \R^d$ and $\gamma, \gamma' \in \Y$, let 
\begin{align*}
&X^{\gamma}_{1}
= x - h M(\gamma) \nabla V(M(\gamma) x) + \sqrt{2 h} Z_{1}
&Y^{\gamma'}_{1}
= y - h M(\gamma') \nabla V(M(\gamma') y) + \sqrt{2 h} Z_{1}
\end{align*}
with shared Gaussian random variable $Z_1 \sim N(0, I)$.
By \citep[Theorem 2.1.12]{nesterov:2018}, then
\begin{align*}
\E\left[ \norm{X^{\gamma, h}_1 - Y^{\gamma, h}_1}^2 \right]
&\le \left( 1 - \frac{2 h \alpha \beta}{\alpha + \beta} \right) \norm{x - y}^2
+  h \left( h - \frac{1}{\alpha + \beta} \right) \norm{\nabla V(x) - \nabla V(y)}^2
\\
&\le \left( 1 - \frac{2 h_* \alpha \beta}{\alpha + \beta} \right) \norm{x - y}^2.
\end{align*}
For each adapted discretization size $h, h'$, we have
\begin{align*}
&\left( \E\norm{M(\gamma) X^{\gamma, h'}_1 - M(\gamma)  X^{\gamma, h}_1 }^2 \right)^{1/2}
\le |h' - h| \beta \norm{x}.
\end{align*}
These imply along with the assumed adaptation strategy that there exists an invariant measure $\pi_{h^*}$ with finite second moment.
By Lemma~\ref{lemma:contract_containment}, weak containment \eqref{assumption:weak_containment} holds. 

For each adapted discretization size $h, h'$ and each adaptation parameter $\gamma, \gamma' \in \Y$, we have
\begin{align*}
&\left( \E\norm{M(\gamma') X^{\gamma', h'}_1 - M(\gamma)  Y^{\gamma, h}_1 }^2 \right)^{1/2}
\\
&\le 
\left( \E\norm{M(\gamma') X^{\gamma', h'}_1 - M(\gamma')  X^{\gamma', h}_1 }^2 \right)^{1/2}
+ \left( \E\norm{M(\gamma') X^{\gamma', h}_1 - M(\gamma)  X^{\gamma, h}_1 }^2  \right)^{1/2}
\\
&\hspace{1.5em}+ \left( \E\norm{X^{\gamma, h}_1 -  Y^{\gamma, h}_1 }^2  \right)^{1/2}
\\
&\le \beta |h' - h| \norm{x}
+ \norm{M(\gamma') \nabla V( M(\gamma') x ) - M(\gamma)  \nabla V( M(\gamma) x ) }
\\
&\hspace{1.5em}+ \norm{\nabla V( M(\gamma) x ) - \nabla V( M(\gamma) y ) }
\\
&\le \beta |h' - h| \norm{x}
+ 2 \beta \norm{M(\gamma') - M(\gamma) } \norm{ x }
+ \left( 1 - \frac{2 h_* \alpha \beta}{\alpha + \beta} \right) \norm{ x - y }.
\end{align*}
This upper bound implies weak diminishing adaptation \eqref{assumption:weak_da} under this adaptation strategy.
Therefore, this adaptive process converges weakly by Theorem~\ref{theorem:convergence}.
Lemma~\ref{lemma:contract_containment} implies $(\norm{X_t})_{n \ge 0}$ is uniformly integrable and then by Proposition~\ref{proposition:convergence_wasserstein}, the Wasserstein convergence follows.
Corollary~\ref{corollary:wlln} implies a weak law of large numbers for all bounded Lipschitz functions.
\end{proof}

\subsection{Example: adaptive diffusion process}
\label{section:example_adifussion}

\hl{Let the potential $V : \R^d \to \R$ satisfy \eqref{eq:lip_grad}.}
%Let $(\Y, d)$ be a complete separable metric space and for each $\gamma \in \Y$, let $M(\gamma)$ define a symmetric positive definite matrix. 
Let $( M(\gamma) )_{\gamma \in \Y}$ be defined as in Example~\ref{example:adaptive_ula} and consider adapting a stochastic differential equation with $M(\gamma)$ defined for $t \in [0, 1]$ by
\[
dX^{\gamma}_t = - M(\gamma) \nabla V\left( M(\gamma) X^{\gamma}_t \right) dt + \sqrt{2} dW_t
\]
where $(W_t)_{t \ge 0}$ is standard Brownian motion in $\R^d$.
Then for any $\gamma \in \Y$ and $x \in \R^d$, there exists a strong solution $(X^{\gamma}_t)_{t \ge 0}$ that is a Markov process with kernel $\tilde\P^t_{\gamma}$.
Using the solution at $t = 1$, we can define a new Markov chain $X^{\gamma}_{n} | X_0 = x$ with Markov transition kernel $\tilde{P}^1_{\gamma}$.
We can define an adaptive process $(\Gamma_n, X_n)_{n \ge 0}$ with $X_n = M(\Gamma_n) X_n^{\Gamma_n}$ so that $X_n | \Gamma_n = \gamma, X_{n - 1} = x$ has Markov transition $\P_{\gamma}(x, \cdot)$ with invariant measure $\pi(dx) = Z^{-1} \exp(-V(x)) dx$ where $Z = \int \exp(-V(x)) dx$.
This type of adaptive scheme using matrices has been successful in Piecewise-deterministic Markov processes \citep{bertazzi:2022}.

\begin{proposition}
Assume the adaptation plan satisfies Assumption~\ref{assumption:adaptation_plan_ula} and \hl{assume the} \hl{potential $V : \R^d \to \R$ satisfies \eqref{eq:lip_grad} and \eqref{eq:strong_convex}}.
Then for every $\gamma, x \in \Y \times \R^d$, the adaptive diffusion process converges in the $L^1$-Wasserstein distance
\[
\lim_{t\to\infty} \W_{\norm{\cdot}}[ \A^{(t)}( (\gamma, x), \cdot), \pi ]
= 0
\]
and a weak law of large numbers for all bounded Lipschitz continuous functions.
\end{proposition}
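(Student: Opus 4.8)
The plan is to follow the template of the adaptive unadjusted Langevin process of Section~\ref{example:adaptive_ula}, replacing the one-step Euler map by the time-$1$ flow of the diffusion and verifying three ingredients: an $L^2$-Wasserstein contraction of the transformed kernels (to obtain weak containment via Lemma~\ref{lemma:contract_containment}), a continuity-in-$\gamma$ estimate for $\P_\gamma$ (to obtain weak diminishing adaptation), and a uniform second-moment bound (to upgrade from $\W_{\norm{\cdot}\wedge 1}$ to $\W_{\norm{\cdot}}$ via Proposition~\ref{proposition:convergence_wasserstein}). The adaptive-MCMC bookkeeping is then entirely inherited from the earlier results.

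First I would record, as in Section~\ref{example:adaptive_ula}, that under Assumption~\ref{assumption:adaptation_plan_ula} the drift $\tilde V_\gamma := V(M(\gamma)\,\cdot)$ has gradient $x \mapsto M(\gamma)\nabla V(M(\gamma)x)$ which is $\beta$-Lipschitz and $\alpha$-strongly convex uniformly in $\gamma$, using \eqref{eq:lip_grad}, \eqref{eq:strong_convex} and the eigenvalue bounds $\lambda_* I \preceq M(\gamma) \preceq I$; that $\pi$ is the pushforward of $Z^{-1}\exp(-\tilde V_\gamma(y))\,dy$ under $M(\gamma)$, so $\pi \P_\gamma = \pi$ for every $\gamma$; and that $\pi$ and each $\P_\gamma(x,\cdot)$ have finite second moments (strong convexity of $V$ forces sub-Gaussian tails for $\pi$, and the time-$1$ diffusion started at $x$ has a finite-variance Gaussian component). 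For weak containment I would use the synchronous coupling: for fixed $\gamma$, run two copies $X^\gamma_\cdot,Y^\gamma_\cdot$ from $x,y$ with the same Brownian motion, so that $\frac{d}{dt}\norm{X^\gamma_t - Y^\gamma_t}^2 = -2\ip{\nabla\tilde V_\gamma(X^\gamma_t) - \nabla\tilde V_\gamma(Y^\gamma_t),\, X^\gamma_t - Y^\gamma_t} \le -2\alpha\norm{X^\gamma_t - Y^\gamma_t}^2$; Grönwall gives $\norm{X^\gamma_1 - Y^\gamma_1} \le e^{-\alpha}\norm{x - y}$ pathwise, and pushing forward through the non-expansive map $M(\gamma)$ yields $\W_{\norm{\cdot},2}(\P_\gamma(x,\cdot),\P_\gamma(y,\cdot)) \le (1-\alpha')\norm{x-y}$ for a suitable $\alpha' \in (0,1)$. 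Lemma~\ref{lemma:contract_containment} (with $p=2$) then supplies weak containment \eqref{assumption:weak_containment} together with $\sup_{t\ge 0}\E[\norm{X_t}^2] < \infty$, whence $(\norm{X_t})_{t\ge 0}$ is uniformly integrable.

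For weak diminishing adaptation I would again use the synchronous coupling, comparing $\P_{\gamma'}(x,\cdot)$ and $\P_\gamma(y,\cdot)$ through the intermediate copy started at $x$ with parameter $\gamma$: the contraction above handles the $x \to y$ passage, while for the $\gamma' \to \gamma$ passage the difference $Z_t := X^{\gamma'}_t - X^\gamma_t$ (same starting point $x$, same Brownian motion) satisfies $\frac{d}{dt}\norm{Z_t}^2 = -2\ip{\nabla\tilde V_{\gamma'}(X^{\gamma'}_t) - \nabla\tilde V_\gamma(X^\gamma_t),\, Z_t}$; adding and subtracting $\nabla\tilde V_\gamma(X^{\gamma'}_t)$ and using continuity of $M(\cdot)$, the Lipschitz bound \eqref{eq:lip_grad}, and the eigenvalue bounds, a Grönwall estimate bounds $\norm{Z_1}$ by a constant multiple of $\norm{M(\gamma') - M(\gamma)}\,(1 + \norm{x})$. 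Because $\Gamma_{t+1} = \Gamma_t$ whenever $\norm{X_t} > R$ and $d(\Gamma_{t+1},\Gamma_t) \to 0$ in probability with $M$ continuous, this bound tends to $0$ in probability, so weak diminishing adaptation \eqref{assumption:weak_da} holds.

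Finally, Theorem~\ref{theorem:convergence} gives $\W_{\norm{\cdot}\wedge 1}(\A^{(t)}((\gamma,x),\cdot),\pi) \to 0$; since $(\norm{X_t})_{t\ge 0}$ is uniformly integrable and $\int \norm{x}\,\pi(dx) < \infty$, Proposition~\ref{proposition:convergence_wasserstein} upgrades this to convergence in $\W_{\norm{\cdot}}$, and Corollary~\ref{corollary:wlln} then yields the weak law of large numbers for bounded Lipschitz functions. I expect the main obstacle to be the diffusion-level stochastic calculus, in particular the Grönwall comparison of the time-$1$ flows of two SDEs with different drift coefficients, where the linear growth of $\nabla\tilde V_\gamma$ must be absorbed by the freezing of the adaptation outside the ball of radius $R$; once that estimate is in hand, everything else is a direct citation of the preceding results.
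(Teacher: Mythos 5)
Your proposal follows essentially the same route as the paper: a synchronous coupling of the two diffusion flows, a Grönwall estimate yielding $\W_{\norm{\cdot},2}(\P_\gamma(x,\cdot),\P_\gamma(y,\cdot)) \le e^{-\alpha'}\norm{x-y}$, Lemma~\ref{lemma:contract_containment} for weak containment and uniform integrability of $\norm{X_t}$, a second synchronous-coupling Grönwall bound for the two drifts to verify weak diminishing adaptation, and then Theorem~\ref{theorem:convergence}, Proposition~\ref{proposition:convergence_wasserstein}, and Corollary~\ref{corollary:wlln} in sequence. The only difference is that you spell out the comparison of the two flows $X^{\gamma'}_t$ and $X^\gamma_t$ with different drifts (which the paper dispatches with ``a similar argument as in Example~\ref{example:adaptive_ula}''), and you note sub-Gaussian tails of $\pi$ where the paper instead cites Durmus and Moulines for $\int\norm{y}^2\pi(dy)<\infty$; these are cosmetic, not substantive, divergences.
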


\begin{proof}
For $\gamma, \gamma' \in \Y$ and $x, y \in \R^d$, let $X^{\gamma}_t | X_0 = x$ and $Y^{\gamma'}_t | Y_0 = y$ and $Y_0 = y$ share the same Brownian motion so that these random variables define a coupling.
By strong convexity, for every $x, y \in \R^d$ $\ip{ \nabla V(x) - \nabla V(y), y - x } \ge \alpha \norm{x - y}^2$.
Define $f_\gamma(t) =  \norm{ X^{\gamma}_t - Y^{\gamma}_t }^2$ and it follows that
\begin{align*}
\frac{d}{dt} f_\gamma(t)
&= \frac{d}{dt} \norm{ 
x - y - M(\gamma) \int_0^t \left[ \nabla V(M(\gamma) X^{\gamma}_s) - \nabla V(M(\gamma) Y^{\gamma}_s) \right] ds 
}^2
\\
&= - 2 \ip{ \nabla V(M(\gamma) X^{\gamma}_t) - \nabla V(M(\gamma) Y^{\gamma}_t) , M(\gamma) (X^{\gamma}_t - Y^{\gamma}_t) } 
\\
&\le -2 \alpha f_\gamma(t).
\end{align*}
By Gronwall's inequality,
$
\sqrt{ \E\norm{ X^{\gamma}_1 - Y^{\gamma}_1 }^2 }
\le \exp(- \alpha) \norm{x - y}.
$
We have the upper bound
\begin{align}
\W_{\norm{\cdot}, 2}\left[ \P_\gamma(x, \cdot), \P_\gamma(y, \cdot) \right]
\le \sqrt{ \E\norm{ M(\gamma) X^{\gamma}_1 - M(\gamma) Y^{\gamma}_1 }^2 }
\le \exp(- \alpha) \norm{x - y}.
\label{eq:adaptive_diffusion_ub}
\end{align}
By \citep[Propositon 1]{durmus:2019}, $\int \norm{y}^2 \pi(dy)$ is finite and by Lemma~\ref{lemma:contract_containment}, weak containment \eqref{assumption:weak_containment} holds.
Lemma~\ref{lemma:contract_containment} then implies weak containment \eqref{assumption:weak_containment} and $(\norm{ X_n})_{n \ge 0}$ is uniformly integrable.
A similar argument as in Example~\ref{example:adaptive_ula} shows weak diminishing adaptation \eqref{assumption:weak_da}.
By Proposition~\ref{proposition:convergence_wasserstein}, the convergence in Wasserstein follows and Corollary~\ref{corollary:wlln} implies a weak law of large numbers for all bounded Lipschitz functions.
\end{proof}

\section{\hl{Connections to geometric drift and coupling conditions}}
\label{section:dm}

A general approach to satisfy the containment condition \eqref{assumption:weak_containment} is through a simultaneous version of the Weak Harris' Theorem \citep[Theorem 4.8]{hairer:etal:2011}.
Other similar convergence bounds for non-adapted Markov chains could be modified to simultaneous versions as well \citep{durmus:moulines:2015, qin:hobert:2022}.

\begin{theorem}
(Simultaneous Weak Harris' Theorem)
\label{theorem:simult_weak_harris}
Let $(\P_\gamma)_{\gamma \in \Y}$ be a family of Markov kernels on $\X$ with invariant probability measure $\pi$.
\begin{itemize}
\item (Simultaneous geometric drift) Suppose there is a Borel drift function $V : \Y \times \X \to [0, \infty)$ and constants $\lambda \in (0, 1)$, $K \in (0, \infty)$ such that for every $\gamma, x \in \Y \times \X$,
\[
\int_{\X} V(\gamma, x') \P_{\gamma}(x, dx') \le \lambda V(\gamma, x) + K.
\]
\item ($\rho$-contracting) Suppose there is a $\kappa \in (0, 1)$ such that for every $x, y \in \X$ with $\rho(x, y) < 1$ and every $\gamma \in \Y$,
\[
\W_{\rho \wedge 1}(\P_{\gamma}(x, \cdot), \P_{\gamma}(y, \cdot))
\le (1-\kappa) \rho(x, y).
\]
\item ($\rho-$small) Suppose for some constants $\alpha, \delta \in (0, 1)$ and every $\gamma \in \Y$,
\[
\sup_{x, y \in C_{\gamma}} \W_{\rho \wedge 1}(\P_{\gamma}(x, \cdot), \P_{\gamma}(y, \cdot))
\le 1 - \alpha
\]
where $C_{\gamma} = \{ x \in \X : V(\gamma, x) \le (1 + \delta) 2 K /(1 - \lambda) \}$.
\end{itemize}
Then there is an explicit $\alpha^* \in (0, 1)$ depending on $\alpha, \kappa, \lambda$ such that for every $t \in \Z_+$ and every $\gamma, x \in \Y \times \X$,
\[
\W_{\rho_\gamma}(\P^{t}_{\gamma}(x, \cdot), \pi)
\le (1 - \alpha^*)^{t} \sqrt{(1 + \beta^* V(\gamma, x) + (\alpha \wedge \kappa)/ [ 4(1 - \lambda)) ]}
\]
where $\rho_\gamma(u, v) = \sqrt{( \rho(u, v) \wedge 1) \left[ 1 + \beta^* V(\gamma, u) + \beta^* V(\gamma, v) \right] }$ and $\beta^* = ( \alpha \wedge \kappa )/(4K)$.
\end{theorem}

\begin{proof}
The argument is inspired by \citep[Theorem 4.8]{hairer:etal:2011}.
Fix $\gamma \in \Y$.
For $\beta > 0$, define $\rho_{\beta, \gamma}(x, y) = \sqrt{( \rho(x, y) \wedge 1) (1 + \beta ( V(\gamma, x) + V(\gamma, y) ) }$.
First assume for $x, y \in \X$, $\rho(x, y) \ge 1$ and $V(\gamma, x) + V(\gamma, y) > R$.
Now for $\delta > 0$, choose $R = (1 + \delta) 2 K /(1 - \lambda)$.
Then using the simultaneous drift condition
\begin{align*}
\W_{\rho_{\beta, \gamma}}( \P_{\gamma}(x, \cdot), \P_{\gamma}(x, \cdot) )^2
&\le 1 + \beta \P_\gamma V(\gamma, x) + \beta \P_\gamma V(\gamma, y)
\\
&\le 1 - \lambda + \lambda ( 1 + \beta V(\gamma, x) + \beta V(\gamma, y) ) + \beta 2 K
\\
&\le \left[ (1 - \lambda) \frac{1 + \beta 2 K/(1 - \lambda)}{1 + \beta R} + \lambda \right] \rho_{\beta, \gamma}(x, y)^2.
\end{align*}
Now assume for $x, y \in \X$, $\rho(x, y) \ge 1$ and $V(\gamma, x) + V(\gamma, y) \le R$.
Then using that $C_\gamma$ is $\rho-$small and the simultaneous drift condition and choosing $\beta \le \alpha/(4K)$,
\begin{align*}
\W_{\rho_{\beta, \gamma}}( \P_{\gamma}(x, \cdot), \P_{\gamma}(x, \cdot) )^2
&\le (1 - \alpha) \left[ 1 + \beta \P_\gamma V(\gamma, x) + \beta \P_\gamma V(\gamma, y) ) \right]
\\
&\le (1 - \alpha) \rho(x, y) \wedge 1 \left[ 1 + \lambda \beta \left( V(x) + V(y) \right) + \beta 2 K \right]
\\
&\le (1-\alpha/2) \rho_{\beta, \gamma}(x, y)^2.
\end{align*}
Next assume for $x, y \in \X$, $\rho(x, y) < 1$.
Then using $\rho$-contracting and the simultaneous drift condition with and $\beta \le \kappa/(4 K)$,
\begin{align*}
\W_{\rho_{\beta, \gamma}}( \P_{\gamma}(x, \cdot), \P_{\gamma}(x, \cdot) )^2
&\le (1 - \kappa) \rho(x, y)  \left[ 1 + \beta \P_\gamma V(\gamma, x) + \beta \P_\gamma V(\gamma, y)) \right]
\\
&\le \rho(x, y) \wedge 1  \left[ 1 - \kappa + \beta 2 K + (1 - \kappa) \lambda \beta ( V(\gamma, x) + V(\gamma, y) ) \right]
\\
&\le (1 - \kappa/2) \rho_{\beta, \gamma}(x, y)^2.
\end{align*}
\end{proof}

Theorem~\ref{theorem:simult_weak_harris} can be seen as an extension of simultaneous geometric drift and minorization conditions \citep[Theorem 18]{roberts:rosenthal:2007}.
We allow the drift function to depend on on the adapted tuning parameter and the metric $\rho$ is not restricted to the Hamming metric.
For drift functions $V$ constant in $\gamma \in \Y$ so that we can write $V : \X \to [0, \infty)$, we have the following result for the adaptive process if the conditions of Theroem~\ref{theorem:simult_weak_harris} hold true.

\begin{theorem}
\label{theorem:weak_harris_adaptive_conv}
Suppose the adaptive process $(\Gamma_t, X_t)_{t \ge 0}$ with initialization probability measure $\mu$ as in \eqref{eq:adaptive_process} constructed with Markov kernels $(\P_\gamma)_{\gamma \in \Y}$ satisfies weak diminishing adaptation~\eqref{assumption:weak_da}. 
Suppose the conditions of Theorem~\ref{theorem:simult_weak_harris} are satisfied for $(\P_\gamma)_{\gamma \in \Y}$ with a drift function $V$ constant in $\gamma \in \Y$.
Then
\[
\lim_{t\to\infty} \W_{\rho \wedge 1}\left( \mu\A^{(t)}, \pi \right)
= 0.
\]
\end{theorem}

\begin{proof}
Since $\rho \wedge 1$ is bounded, it will suffice to assume the adaptive process $(\Gamma_t, X_t)_t$ is initialized at $\gamma_0, x_0 \in \Y \times \X$.
The conclusion follows from Theorem~\ref{theorem:convergence} if we verify weak containment \eqref{assumption:weak_containment}.
The conditions of Theorem~\ref{theorem:simult_weak_harris} imply in order to satisfy weak containment, it suffices to show $( V(X_t) )_t$ is bounded in probability.
The geometric drift condition implies $( V(X_t) )_{t \ge 0}$ is bounded in probability by \citep[Lemma 15]{roberts:rosenthal:2007} since it is constant in $\gamma \in \Y$.
%The simultaneous geometric drift and a similar argument to Corollary~\ref{corollary:weighted_tv_convergence} extends weak convergence.
\end{proof}

To satisfy weak containment \eqref{assumption:weak_containment}, Theorem~\ref{theorem:weak_harris_adaptive_conv} can be weakened to subgeometric rates of convergence.
If there is a constant $M_0 > 0$, a Borel measurable function $V : \X \to [0, \infty)$ such that $( V(X_t) )_{t \ge 0}$ is bounded in probability and a rate function $r : \Z_+ \to [0, 1]$ with $\lim_{n \to \infty} r(n) = 0$ such that for all $n, t \in \Z_+$
\[
\W_{\rho \wedge 1}\left( \P_{\Gamma_t}^n( X_t, \cdot ), \pi \right) 
\le M_0 r(n) V(X_t)
\]
then it follows weak containment \eqref{assumption:weak_containment} holds.
For example, a polynomial drift condition is sufficient for $( V(X_t) )_{t \ge 0}$ to be bounded in probability \citep{bai:2011} and existing subgeometric rates of convergence for non-adapted Markov chains in Wasserstein distances can be modified to simultaneous versions \citep{butkovsky:2014,durmus:fort:moulines:2016}.

In certain cases, it can be difficult to find a drift function that does not change with $\gamma \in \Y$ and alternative techniques can be used to show (strong) containment.
One successful strategy here is to only apply adaptation on a compact or bounded set of the state space \citep[Theorem 21]{craiu:etal:2015}.
We will say adaptation is restricted to a Borel set $S \subset \X$ for the adaptive process $(\Gamma_t, X_t)_{t \ge 0}$ if for all $t \in \Z_+$, $X_{t-1} \not\in S$, $\Gamma_t = \Gamma_{t-1}$.
Our next result shows the benefit of Theorem~\ref{theorem:simult_weak_harris} with drift functions depending on the adapted tuning parameter if adaptation is restricted to a set.

\begin{proposition}
Suppose the adaptive process $(\Gamma_t, X_t)_{t \ge 0}$ with initialization probability measure $\mu$ as in \eqref{eq:adaptive_process} constructed with Markov kernels $(\P_\gamma)_{\gamma \in \Y}$ satisfies weak diminishing adaptation~\eqref{assumption:weak_da}. 
Suppose the conditions of Theorem~\ref{theorem:simult_weak_harris} are satisfied with Markov kernels $(\P_\gamma)_{\gamma \in \Y}$ and drift function $V(\cdot, \cdot)$.
Additionally, assume $(\Gamma_t, X_t)_{t \ge 0}$ has adaptation restricted to the Borel set $S \subset \X$ and 
$\sup_{x \in S} \sup_{t \in \Z_+} \E\left[ V(\Gamma_{t + 1}, X_{t + 1}) \mid \H_{t}, X_{t} = x \right] < \infty$.
Then
\[
\lim_{t\to\infty} \W_{\rho \wedge 1}\left( \mu\A^{(t)}, \pi \right)
= 0.
\]
\end{proposition}

\begin{proof}
It will suffice to assume the adaptive process $(\Gamma_t, X_t)_t$ is initialized at $\gamma_0, x_0 \in \Y \times \X$, and by Theorem~\ref{theorem:convergence}, we may show that $(V(\Gamma_t, X_t))_t$ is bounded in probability.
By the assumptions, we can assume $\E\left[ V(\Gamma_t, X_t) I_{S}(X_{t-1}) \right] \le K$.
We have the upper bound
\begin{align*}
\E\left[ V(\Gamma_t, X_t) \right]
&\le \E\left[ V(\Gamma_t, X_t) I_{S}(X_{t-1}) \right] 
+ \E\left[ V(\Gamma_t, X_t) I_{S^c}(X_{t-1}) \right]
\\
&\le K + \E\left[ V(\Gamma_{t-1}, X_t) I_{S^c}(X_{t-1}) \right]
\\
&\le 2K + \lambda \E\left[ V(\Gamma_{t-1}, X_{t-1}) \right].
\\
&\le 2K/(1 - \lambda) + V(\gamma_0, x_0).
\end{align*}
Therefore, $( V(\Gamma_t, X_t) )_t$ is bounded in probability by Markov's inequality.
\end{proof}

\section{Concluding remarks}
\label{section:conclusion}

This article developed weak convergence of adaptive MCMC processes under general conditions which is suited to situations where convergence in total variation is inadequate. 
One motivation is adapting the tuning parameters of reducible Markov chains where the traditional theory of adaptive MCMC may not be applied.
Another application of the developed theory can be used to analyze adaptive MCMC processes where (strong) containment is difficult to show but weak containment may be more tractable.
The weak law of large numbers developed here can be seen as an extension of \citep[Theorem 23]{roberts:rosenthal:2007} and appears of practical interest for the reliability and stability of adaptive MCMC simulations widely used in statistics and machine learning.

There are many future research directions worthy of pursuit.
While the developed theory for weak convergence allows a Markov process in continuous time to be adapted at discrete times, the proof techniques do not appear limited to discrete-time adaptive processes\hl{. In particular,} a precise formulation of an adaptive MCMC process in continuous-time with similar convergence results appears feasible.
It also appears some techniques for the convergence results developed here might be extended to develop quantitative convergence rates in Wasserstein distances or mixing times for adaptive MCMC, but would require stronger assumptions on the adaptation plan.
Another interesting direction is to try to generalize Theorem~\ref{theorem:convergence} to hold for general possibly unbounded Wassserstein distances by adapting weak containment and weak diminishing adaptation to hold using general Wasserstein distances.
%It also appears alternative assumptions for the weak law of large numbers may exist and furthermore, necessary conditions for a law of large numbers to hold are of further interest for adaptive MCMC.

\section*{Acknowledgements}
\hl{The authors would like to thank the Associate Editor and the two anonymous referees for their insightful comments in helping to improve this article.}

\section*{Funding information}
\hl{This work was partially funded by NSERC Discovery Grant RGPIN-2019-04142, and by a postdoctoral fellowship at the University of Toronto.}

%\section*{Competing interests}
%\hl{There were no competing interests to declare which arose during the preparation or publication process of this article.}

\bibliography{references.bib}

\end{document}